\DeclareMathOperator*{\argmax}{\mathrm{arg\,max}}
\newcommand{\floor}[1]{\lfloor #1 \rfloor}
\newcommand{\rsg}{\rightsquigarrow}
\theoremstyle{definition}
\newtheorem{theorem}{Theorem}[section]
\newtheorem{lemma}[theorem]{Lemma}
\newtheorem{definition}[theorem]{Definition}
\newtheorem{claim}[theorem]{Claim}
\title{Competitively Constructed Planar Graphs}
\author{Wesley Pegden\thanks{Department of Mathematical Sciences, Carnegie Mellon University. Email: \texttt{wes@math.cmu.edu}} \and Eric Wang\thanks{School of Computer Science, Carnegie Mellon University. Email: \texttt{ejwang2@andrew.cmu.edu}}}
\date{October 2025}
\begin{document}

\maketitle

\begin{abstract}
We introduce and study two Maker-Breaker-like games for constructing planar graphs: the \textit{edge drawing game}, where two players take turns drawing non-intersecting edges between points in the plane, and the \textit{circle packing game}, where the players take turns placing disjoint circles in the plane. Both games produce planar graphs: the edge drawing game results in a plane graph drawing, and the circle packing game yields a planar graph via the contact graph of the packing. For both games, we give necessary conditions under which a given planar graph can be constructed. We also show that the two games are indeed different by giving a class of graphs which can be constructed in one but not the other. 
\end{abstract}

\section{Introduction}
A Maker-Breaker game is a two player game played on a hypergraph $\mathcal H$ where the players, Maker and Breaker, take turns occupying points of $\mathcal H$. Maker's goal is to fully occupy an edge; Breaker's goal is to prevent this. 

There is a surprising connection, first noted by Erdős and Selfridge \cite{ES73}, between optimal play in Maker-Breaker games and randomness. A striking example due to Beck \cite{Bec08} is the Maker-Breaker game played on the edge set of the complete graph $K_n$. Maker wins if he occupies the edges of a $K_q$. Evidently, Maker has a winning strategy for small enough $q$. What is the threshold $q(n)$ such that Maker has a strategy to occupy a $K_{q(n)}$ but not a $K_{q(n)+1}$? It turns out that for sufficiently large $n$, this threshold $q(n)$ coincides, up to an additive constant, with the clique number of the random graph $G_{n, 1/2}$. 

Motivated by this connection, we study competitively constructed planar graphs. We consider two ways of constructing planar graphs. In the first, the players take turns drawing non-intersecting curves between pairs of points in the plane; the second involves taking turns drawing internally disjoint circles in the plane, thus constructing a circle packing, which has an associated contact graph. 

We now discuss the setup of these games more precisely. The first game we consider is the edge drawing game. Fix $n$ points in the plane. Two players, whom we call Builder and Spoiler, take turns drawing a non-self-intersecting curve between two distinct points. Distinct pairs of points must be chosen on each turn, and no two curves may intersect except at endpoints. In this way, Builder and Spoiler together construct a drawing of a planar graph. The game ends when no more edges can be drawn, i.e. after exactly $3n-6$ moves. 

Sometimes we will consider a variant of this procedure where Builder (resp. Spoiler) may draw $\beta$ edges in one turn; in such cases we say that Builder (resp. Spoiler) is given a $\beta : 1$ (resp. $1 : \beta$) bias. 

The first game we analyze is the \textit{Hamiltonian cycle game}. Let $G$ be the planar graph obtained at the end of the game. Builder wins the Hamiltonian cycle game if $G$ is Hamiltonian, and otherwise Spoiler wins. We show that the winner of this game depends on the bias. 
\begin{theorem}\label{thm:ham21}
In a $2:1$ biased game, Builder wins the Hamiltonian cycle game. 
\end{theorem}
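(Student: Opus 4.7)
The plan is for Builder to commit, at the start of the game, to a fixed simple polygon $P = v_1 v_2 \cdots v_n v_1$ whose vertex set is the given set of $n$ points; such a polygon exists for any point configuration in general position (take, for example, a star polygon from a point in the interior of the convex hull). Builder's objective is to draw every edge of $P$, which realizes $P$ as a Hamiltonian cycle in the final planar graph and so wins.

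On each of his turns Builder typically draws two undrawn edges of $P$, but when Spoiler's most recent play threatens to create a blocking cycle Builder devotes one of his moves to a defensive response. The invariant to maintain is that at every state of the game, every undrawn polygon edge $e_i = v_i v_{i+1}$ remains \emph{drawable}---i.e., $v_i$ and $v_{i+1}$ lie on the boundary of a common face of the current drawing. Since the shortest blocking configuration requires at least a triangle composed of Spoiler chords, Builder needs at most one defensive move per three Spoiler moves, well within his budget of $2(n-2)$ moves for $n$ polygon edges.

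The main obstacle is establishing that Builder can always maintain the drawability invariant. A violation amounts to a cycle $C$ in the drawn graph whose Jordan curve separates $v_i$ from $v_{i+1}$ in the plane. Builder's own edges form subpaths of $P$ and so contain no cycle while $P$ is incomplete, so every such $C$ uses at least one Spoiler edge. If $C$ is incident to a polygon neighbor of $v_i$ or $v_{i+1}$, then $v_i$ or $v_{i+1}$ itself lies on $C$ and hence on the boundary of both of its bounded faces, precluding separation. The genuinely hard case is when $C$ avoids a neighborhood of both $v_i$ and $v_{i+1}$; in its shortest form $C$ is a triangle of three Spoiler chords enclosing $v_i$, and Builder's six intervening moves (across Spoiler's three turns to assemble the triangle) suffice, by a case analysis on Spoiler's chord routes, to either pre-draw a polygon edge incident to the threatened vertex before the triangle closes, or to draw a chord that forces Spoiler's nascent cycle to close in a configuration that does not strictly enclose its target. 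Longer blocking cycles involve still more Spoiler edges and pose a yet more forgiving timing problem, completing the argument.
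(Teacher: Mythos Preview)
Your argument has a genuine gap at its core. The assertion that ``the shortest blocking configuration requires at least a triangle composed of Spoiler chords'' is false, and the step ``if $C$ is incident to a polygon neighbor of $v_i$ or $v_{i+1}$, then $v_i$ or $v_{i+1}$ itself lies on $C$'' is a non sequitur. A \emph{single} Spoiler edge, together with Builder's own polygon edges, can already form a separating cycle.

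Concretely: suppose Builder opens with the consecutive polygon edges $v_1v_2$ and $v_2v_3$. Spoiler then plays the chord $v_1v_3$, routing the curve so that (say) $v_5$ is enclosed in the triangle $v_1v_2v_3$ while $v_4$ and $v_6$ are not. After this one Spoiler move the polygon edges $v_4v_5$ and $v_5v_6$ are permanently undrawable. Here the blocking cycle $C = v_1v_2v_3$ is incident to $v_3$, a polygon neighbor of $v_4$, yet neither $v_4$ nor $v_5$ lies on $C$, so your case dismissal fails. Worse, Spoiler may route this single curve to enclose any subset of the isolated vertices, destroying arbitrarily many polygon edges at once; your budget analysis (``one defensive move per three Spoiler moves'') therefore never gets off the ground. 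If Builder instead avoids drawing adjacent polygon edges, Spoiler simply supplies the missing link with one move and closes the cycle with the next, to the same effect. The fixed, publicly known target cycle is the real obstacle: it tells Spoiler exactly which adjacencies to sever.

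The paper's proof takes the opposite tack. Builder commits to no cycle in advance. He responds locally to each Spoiler move so as to maintain, after every round, that each connected component with a triangular outer face is \emph{eventually strongly Hamiltonian}: every completion of it to a triangulation admits Hamiltonian paths between each pair of outer-face vertices, and near-Hamiltonian paths missing any one of them. The $2:1$ bias is spent not on racing toward a predetermined cycle but on a local repair---completing a triangle, or merging two components along a prescribed pattern---that restores this invariant after each Spoiler move. The Hamiltonian cycle is never targeted edge by edge; it falls out of the invariant once the board is triangulated.
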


On the other hand:
\begin{theorem}\label{thm:ham13}
In a $1:3$ biased game, Spoiler wins the Hamiltonian cycle game for $n$ sufficiently large.
\end{theorem}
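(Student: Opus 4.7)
The plan is to give a strategy for Spoiler that, exploiting the $3{:}1$ edge advantage, forces the final triangulation to contain a specific non-Hamiltonian substructure inside a separating triangle. In Spoiler's opening turn, he picks three vertices $a, b, c$ and a constant-size subset $I$ of the remaining vertices (of size $m$ to be chosen below), and draws the three edges $ab, bc, ca$ as non-crossing curves whose union is a Jordan curve enclosing exactly the vertices of $I$. Since no edges have yet been drawn, this is achievable for any point configuration by a direct Jordan-curve argument. The resulting triangle $T = \{a,b,c\}$ is then a separating triangle of the final graph, with $I$ as its interior.

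Spoiler's subsequent turns are used to force the sub-triangulation on $I \cup T$ to match a pre-chosen template $H$. A short case analysis---counting the ways a Hamilton cycle of the final graph can cross in and out of $T$, noting that only the three $T$-vertices are available as transition points---shows that any such cycle, restricted to edges with both endpoints in $I \cup T$, must form one of two structures: either a Hamilton path in $H$ between two $T$-vertices covering all of $I \cup T$, or such a path covering $I$ together with just two of the three $T$-vertices (the third $T$-vertex being left isolated, its two cycle edges both going outside $T$). Accordingly, the template $H$ is chosen to forbid both kinds of paths. Because $|I|$ is a constant, the contest inside $T$ involves only $O(1)$ edges, and Spoiler's $3$-to-$1$ edge surplus lets him force $H$ via a pairing strategy: each potential Builder deviation from $H$ can be neutralized within three of Spoiler's subsequent moves, and Spoiler is free to ignore the exterior game, which plays no role in the obstruction.

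The main obstacle I expect is constructing the template $H$ with the required strong non-Hamilton-path property: not only should $H$ admit no Hamilton $T$-to-$T$ path, but even after deleting any one of the three $T$-vertices, no Hamilton path between the remaining two $T$-vertices visiting all of $I$ should exist in $H$. This is considerably stronger than mere non-Hamiltonicity, and its construction is the delicate combinatorial step, achievable by adapting known non-Hamiltonian planar triangulation constructions (e.g., variants of the Goldner-Harary graph) with local modifications to enforce the vertex-deletion property. Once such an $H$ is fixed, the forcing strategy and the non-Hamiltonicity argument follow directly.
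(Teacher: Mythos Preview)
Your proposal has a genuine gap in the ``forcing'' step. You assert that Spoiler can force the interior of $T$ to become a specific template triangulation $H$ via a ``pairing strategy,'' but this does not make sense in the edge-drawing model: every edge either player draws becomes a permanent part of the final graph. Once Builder draws any edge $e$ inside $T$ that is not an edge of your template $H$, the interior triangulation can no longer equal $H$; there is nothing for Spoiler's three response edges to ``neutralize.'' Pairing strategies are tools for Maker--Breaker games where the players are \emph{claiming} elements and one side wins by preventing the other from occupying a target set; here both players are jointly building one planar graph, so the analogy breaks down. Concretely, after Spoiler's opening triangle it is Builder's move, and Builder can immediately play inside $T$, joining two interior vertices in a way incompatible with any pre-chosen $H$. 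To salvage the idea you would need, at minimum, an \emph{adaptive} family of bad triangulations closed under single Builder moves followed by three Spoiler responses---a much stronger statement that you have not supplied and that is far from obvious for a constant-size $I$.

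A second, smaller gap: you assume a template $H$ exists with no Hamilton $T$-to-$T$ path even after deleting any one $T$-vertex, citing Goldner--Harary-style constructions. Note, however, that for any triangulation of a triangle the subgraph induced on the interior vertices is connected, so the simplest obstruction (a small cut) is unavailable; the existence of such an $H$ with a fixed outer triangle requires a genuine argument, not just a pointer to a non-Hamiltonian triangulation.

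For contrast, the paper does not attempt to force a fixed local template at all. Instead Spoiler grows a skeleton $S$ one vertex per turn: each turn she takes an isolated vertex $x$ in the outer face of the current skeleton and draws the three edges from $x$ to the current outer $3$-cycle, trapping one isolated vertex inside each of the two new triangular faces. Builder's single edge per round is absorbed into this process by choosing $x$ accordingly. At the end $S$ induces a planar triangulation on roughly $n/3$ vertices with almost every face containing a single trapped vertex, so $G-S$ has more than $|S|$ components and $G$ fails the $1$-toughness condition for Hamiltonicity. The point is that Spoiler never needs to dictate a specific triangulation; she only needs to keep producing faces that trap isolated vertices, and the $3{:}1$ bias is exactly what lets her add one skeleton vertex (three edges) per round.
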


Now, we ask if Builder can force the existence of a vertex of high degree in the subgraph induced by the edges Builder draws. Given adjacent vertices $u$ and $v$, say that $u$ and $v$ are \textit{Builder-adjacent} if Builder drew the edge $\{u, v\}$ at some point during the game. Analogously, define the \textit{Builder-degree} of a vertex $u$ to be the number of vertices $v$ such that $u$ and $v$ are Builder-adjacent. We show that Builder can indeed force a constant fraction of the total number of vertices to be Builder-adjacent to a pre-selected vertex. Moreover, the strategy Builder adopts to achieve this turns out to guarantee that the resulting graph has constant diameter. 
\begin{theorem}\label{thm:degree}
Builder has a strategy wherein:
\begin{itemize}
    \item He can nominate a vertex $v$ before the start of the game and ensure that $v$ has Builder-degree $cn$ by the end of the game, for an absolute constant $c$. 
    \item The graph at the end of the game has diameter $d$ for an absolute constant $d$. 
\end{itemize}

\end{theorem}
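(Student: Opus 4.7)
I would exhibit a single Builder strategy that achieves both conclusions simultaneously. The naive attempt---have Builder just repeatedly draw $v$-incident edges---fails: if Builder plays only $v$-spokes, Spoiler can surround $v$ with a triangle on three of $v$'s Builder-neighbors in three of its own moves, capping Builder-degree at $3$. So Builder must intersperse $v$-edges with ``defensive'' moves that prevent Spoiler from closing a small cycle around $v$.

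The strategy has three phases. In the \emph{anchoring phase}, Builder plays $O(1)$ moves to establish a wheel-like frame around $v$: a constant number of spokes $vu_i$ to well-spread vertices (say, near the convex hull of the point set) together with a few edges forming a rim. Since this frame has constant size, Spoiler cannot prevent its construction. In the \emph{growth phase}, at each turn Builder plays either a new $v$-edge to a non-neighbor $u$ on the boundary of a face incident to $v$, or, when Spoiler's last move threatens to complete a tight cycle enclosing $v$, a defensive edge to forestall it. The goal is to maintain the invariant that the \emph{accessible set} $A_t$ of non-neighbors of $v$ on the boundary of some face incident to $v$ has size $\Omega(n)$ throughout the first $\Omega(n)$ rounds. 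I would show, by an amortization argument, that Spoiler can reduce $|A_t|$ by more than a constant only by ``paying'' with several earlier moves spent assembling the required sealing configuration; hence after $\Omega(n)$ rounds Builder has gained $\Omega(n)$ Builder-neighbors of $v$.

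For the diameter conclusion, once $v$'s Builder-degree exceeds $cn$, Builder switches to a \emph{diameter phase}: for each remaining vertex $w$ not yet Builder-adjacent to $v$, Builder aims to play at least one edge from $w$ to a Builder-neighbor of $v$. With Builder's remaining budget of $\Theta(n)$ moves and a similar amortization argument, this is achievable; the resulting graph has every vertex within Builder-distance $2$ of $v$, giving diameter at most $4$.

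The main obstacle I expect is formalizing the growth-phase amortization: precisely characterizing which Spoiler moves constitute a ``threat'' and specifying Builder's defensive response so that the amortized reduction in $|A_t|$ per Spoiler move is $O(1)$. I anticipate using Euler's formula to bound the total number of simultaneous sealing configurations Spoiler can maintain, combined with a case analysis based on whether Spoiler's threatening edge reuses an existing Builder-spoke endpoint or introduces fresh vertices; the latter case is easier (fewer vertices are sealed), and the former can be defused by Builder extending a spoke outside Spoiler's partial cycle.
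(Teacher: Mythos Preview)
Your high-level plan matches the paper's, but the growth-phase amortization has a real gap. The claim that Spoiler can reduce $|A_t|$ by more than a constant only by ``paying with several earlier moves'' is false: if $x,y$ are two existing Builder-spoke endpoints, a \emph{single} Spoiler edge $\{x,y\}$ can trap arbitrarily many isolated vertices inside the triangle $vxy$, and Spoiler paid nothing to set this up---Builder drew the spokes. Euler's formula bounds the number of simultaneous sealing triangles, not the damage one of them does. What actually saves Builder (and what you only gesture at with ``extend a spoke outside'') is an explicit one-move counter: when Spoiler plays such a chord, Builder immediately draws a ``canopy'' edge from a vertex of the outer frame over the chord, routed so that every trapped isolated vertex lands in a new triangular face still incident to $v$. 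Making this response always legal requires an invariant far more specific than ``$|A_t|=\Omega(n)$''; the paper maintains a decomposition $\mathcal P(H,N,M)$ recording the frame $H$, the current fan $N$ of $v$-neighbors, and a matching $M$ of Spoiler's isolated edges, and gives a case-by-case response (five cases) to each type of Spoiler move inside an active face. In particular, Spoiler can also threaten to surround $v$ via a path through $M$, and the invariant that $N\neq\emptyset$ whenever $M\neq\emptyset$ is what guarantees Builder an escape; your sketch does not track $M$ at all.

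The diameter phase is also shaky: after growth, the remaining non-neighbors of $v$ may already be sealed by Spoiler inside small faces not bordering any $v$-neighbor, so Builder cannot reach them in one edge, and your ``similar amortization argument'' is doing unexplained work. The paper does not run a separate diameter phase; bounded diameter falls out of the same Phase~1 invariant, since every vertex not adjacent to $v$ ends up confined to a face incident to $v$ that contains only $O(1)$ non-neighbors of $v$.
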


Next, we are interested in whether, given an arbitrary planar graph $H$, Builder has a strategy such that $H \subseteq G_i$ for some $i$, where $G_i$ is the graph obtained after $i$ moves. We consider a simplification where there the game is instead played on countably infinite isolated points in the plane---call this the \textit{$H$-subgraph edge drawing game}. \Cref{thm:degree} shows that Builder wins the $K_{1,r}$-subgraph edge drawing game for all $r$. Can Builder construct any planar graph? 

The answer is no, as our next result shows. An \textit{Apollonian network} is a planar 3-tree. That is, they are defined recursively as follows:
\begin{itemize}
    \item $K_3$ is an Apollonian network.
    \item Let $G$ be an Apollonian network. Choose a face $F$ of $G$, add a vertex $v$ in $F$, and add edges from $v$ to the three vertices incident to $F$. The resulting graph $G'$ is an Apollonian network. 
\end{itemize}
\begin{theorem}\label{thm:apollonian}
Let $H$ be a connected planar graph, and suppose Builder wins the $H$-subgraph edge drawing game. Then $H$ is a subgraph of an Apollonian network. 
\end{theorem}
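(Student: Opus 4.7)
The plan is to prove the contrapositive: if $H$ is not a subgraph of any Apollonian network, then Spoiler has a winning strategy in the $H$-subgraph edge drawing game. Since Builder's winning condition requires $H \subseteq B_i$ for some $i$, a winning strategy for Spoiler precludes this, so it suffices to exhibit such a strategy.

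The strategy I would design for Spoiler is to maintain the invariant that after each of Spoiler's moves, the plane graph $G_i$ formed by all edges drawn so far is a subgraph of some Apollonian network on the vertex set used so far. If this invariant holds for every $i$, then Builder's subgraph $B_i \subseteq G_i$ is always a subgraph of an Apollonian network, and since by hypothesis $H$ is not a subgraph of any Apollonian, $H \not\subseteq B_i$ for all $i$, so Spoiler wins.

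To maintain the invariant, Spoiler plays reactively. When Builder plays an edge $e$, Spoiler selects his response to keep $G_i$ inside some Apollonian extension of $G_{i-1} \cup \{e\}$. The cases to handle are: $e$ partially stacks a new vertex into a triangular face of the current Apollonian skeleton, in which case Spoiler draws another stacking edge to preserve compatibility with a fully-stacked Apollonian; $e$ creates a new quadrilateral face (the worst case permitted by the invariant), in which case Spoiler draws a chord corresponding to an Apollonian stacking into that face; or $e$ already sits inside an Apollonian extension of $G_{i-1}$, in which case Spoiler performs a consolidating move, such as beginning a new stacking into a face that Builder is likely to target next. In every case, one Spoiler edge suffices to restore the invariant.

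The main obstacle is verifying that Spoiler can always locate such a response, particularly when Builder plays an edge that would merge two previously disjoint components of the current Apollonian extension, or plays in a region Spoiler has not yet organized. The key insight I would use is that Spoiler need not commit to a single Apollonian extension: at each point the invariant permits a family of valid extensions, and Spoiler's consolidating moves can be scheduled to keep this family rich enough to absorb any Builder move. Carefully specifying the consolidating-move policy, arguing that Builder moves in disjoint regions can be paired one-for-one with Spoiler responses, and handling the delicate bridging case where Builder connects two organized components are the principal technical steps of the proof.
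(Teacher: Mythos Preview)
Your high-level plan is exactly the paper's: Spoiler plays reactively so that after each of her moves every connected component of the current graph $G_i$ is a subgraph of an Apollonian network. (Minor correction: Builder's winning condition is $H\subseteq G_i$, not $H\subseteq B_i$; this does not affect your argument since the invariant controls all of $G_i$.) But your proposal openly defers ``the principal technical steps,'' and those steps are where the proof actually lives; as written this is an outline, not a proof.

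Concretely, two ingredients are missing. First, one needs a gluing lemma: replacing a triangular face of an Apollonian network by another Apollonian network again yields an Apollonian network. This is what lets the invariant survive when components are merged. Second---and this is the case you yourself flag as ``delicate''---you must say what Spoiler does when Builder bridges two components whose outer faces are $3$-cycles $(u_1,v_1,w_1)$ and $(u_2,v_2,w_2)$. The paper's answer is specific: Spoiler adds one edge so that the six outer vertices are forced, in any eventual triangulation, to contain a $K_4$; since of the two planar triangulations on six vertices only the one containing a $K_4$ is Apollonian, this (together with the gluing lemma) pins the merged component inside an Apollonian network. Your case breakdown into ``stacking into a triangular face / creating a quadrilateral face / consolidating'' does not organize the analysis in a way that exposes this, nor does it explain why a single Spoiler edge always suffices. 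Without a concrete response in the bridging case and the gluing lemma to back it up, the invariant cannot be verified, and the argument has a genuine gap.
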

It is known \cite{APC90} that the set of \textit{partial 3-trees}
\[
\{H \mid \text{$H \subseteq G$ for some Apollonian network $G$}\}
\]
is characterized by the following set of four forbidden minors: $K_5$, $K_{3,3}$, the octahedron, and the pentagonal prism. It follows from \Cref{thm:apollonian} that Builder cannot hope to construct the octahedron or the pentagonal prism (of course, $K_5$ and $K_{3,3}$ are not planar and a fortiori cannot be constructed by Builder).

The second game we consider is the circle packing game. A \textit{circle packing} is a set of circles $\mathcal C = \{\Omega_1, \ldots, \Omega_n\}$ in the plane whose interiors are disjoint. The \textit{contact graph} of $\mathcal C$, denoted $\mathbf G(\mathcal C)$, is the graph on $V(\mathbf G(\mathcal C)) = \{1, \ldots, n\}$ and $\{i, j\} \in E(\mathbf G(\mathcal C))$ if and only if $\Omega_i$ is tangent to $\Omega_j$. An example is shown in \Cref{fig:contactgraph}.

We will abuse notation and also use $\mathbf G(\mathcal C)$ to denote the planar graph drawing induced by $\mathcal C$. That is, place a vertex at the center of each circle in $\mathcal C$, and draw a line segment between two vertices $u$ and $v$ if the corresponding circles centered at $u$ and $v$ are tangent. 

The Koebe-Andreev-Thurston circle packing theorem \cite{PA95} states that for any planar graph $H$, there exists a circle packing $\mathcal C$ such that $\mathbf G(\mathcal C) \cong H$. Motivated by this result, we ask which planar graphs can be competitively constructed in the context of circle packings. 

\begin{figure}
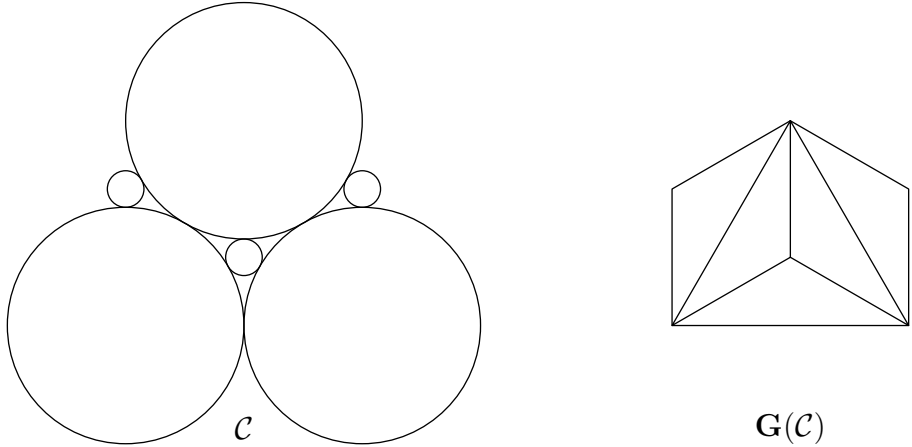

\begin{asy}
size(12cm);
picture left = new picture; picture right = new picture;
import geometry;

pair A = dir(90), B = dir(210), C = dir(330), O = (0, 0), D = (B+C)/2, E = (C+A)/2, F = (A+B)/2, P = reflect(A, B) * O, Q = reflect(A, C) * O;
real r = abs(A - B)/2; 
guide omega = circle(O, 1 - r);
draw(left, circle(A, r)^^circle(B, r)^^circle(C, r)^^omega^^circle(P, 1 - r)^^circle(Q, 1 - r));
label(left, "$\mathcal C$", 2.5 * D);

draw(right, A--P--B--C--Q--A^^A--B--O--C--A--O);
label(right, "$\mathbf G(\mathcal C)$", 2.5 * D);

add(shift(0,0)*left); add(shift(4,0)*right);
\end{asy}
\centering
\caption{Contact graph of a circle packing}
\label{fig:contactgraph}
\end{figure}

More precisely, Builder and Spoiler take turns placing circles in the plane with disjoint interiors. Let $\mathcal C_i$ denote the circle packing after Builder and Spoiler have each played $i$ moves. For a planar graph $H$, the \textit{$H$-subgraph circle packing game} is as follows: Builder wins if $H \subseteq \mathbf G(\mathcal C_i)$ for some $i$, and Spoiler wins otherwise. 
\begin{theorem}\label{thm:circle}
Let $H$ be a planar triangulation on at least 3 vertices. If Builder can win the $H$-subgraph circle packing game, then $H$ is an Apollonian network. 
\end{theorem}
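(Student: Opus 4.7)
\emph{Proof Proposal.}

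I begin with a preliminary observation: for a planar triangulation $H$ on $n$ vertices, being a subgraph of an Apollonian network is equivalent to being one, since both have exactly $3n-6$ edges, the maximum possible for a planar graph on $n$ vertices. Thus \Cref{thm:circle} can be viewed as the circle-packing analogue of \Cref{thm:apollonian} (specialized to triangulations), and I would try to mimic the edge-drawing-game strategy, adapted to the circle packing setting. A tempting shortcut would be to reduce directly to \Cref{thm:apollonian} by translating a winning Builder strategy $\sigma$ in the circle packing game into one for the edge drawing game via the canonical straight-line embedding of the contact graph; however, this translation fails because Spoiler's moves in the edge drawing game (arbitrary non-crossing curves) do not correspond to valid circle placements, so we cannot naively simulate.

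The main plan is therefore to construct Spoiler's strategy directly in the circle packing setting, enforcing a $3$-tree structure on the contact graph through careful circle placement. The key geometric fact is that three mutually tangent circles $A, B, C$ bound two curvilinear triangular gaps in the plane, and the triangle $\{A, B, C\}$ is a non-separating face of the contact graph precisely when both gaps are empty of other circles. Spoiler's strategy is to respond to each Builder move by placing a tiny circle in one of the newly exposed curvilinear gaps, maintaining the invariant that every triangle among Builder's circles in the contact graph is separating (rather than being a face). If this invariant holds throughout, then the triangulation induced on Builder's circles has the property that every triangle is either the outer face or separating---the defining recursive structure of $3$-trees---forcing $H$ to be Apollonian.

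The main obstacle is that a single Builder move can spawn multiple new triangles simultaneously, for instance when Builder's new circle is tangent to a chain of mutually tangent existing circles. Spoiler cannot plug all resulting gaps with a single placement, so the naive invariant fails. The resolution is to weaken the invariant: rather than blocking every triangle, Spoiler targets only those triangles threatening a copy of $H$. Since $H$ has only finitely many triangles, it suffices to ensure that for each partial embedding of $H$ into the contact graph, at least one triangle of $H$ has a Spoiler circle in its intended gap, which breaks the copy. Formalizing this---likely through a potential function that tracks Builder's partial copies of $H$ and bounds the number of configurations Spoiler must neutralize per round---is the technical heart of the proof, and the point where the geometric rigidity of circle packings (as opposed to general planar drawings) is used to distinguish this game from the edge drawing game.
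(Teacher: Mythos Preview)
Your proposal diverges from the paper's argument and contains a genuine gap at its logical core. The invariant you propose---that every triangle among Builder's circles has a Spoiler circle in its curvilinear gap---does not imply that a copy of $H$ appearing in the contact graph is Apollonian. A Spoiler circle sitting inside a triangular region of $\mathbf G(\mathcal C)$ does nothing to prevent that triangle from being a face of the subgraph $\mathbf G(\mathcal D)\cong H$: the extra circle simply lies outside $\mathcal D$ and is irrelevant to whether the required tangencies are present. Concretely, if Builder places six circles realizing the octahedron, tiny Spoiler circles in the eight gaps do not stop the octahedron from being a subgraph. Your stated characterization ``every triangle is the outer face or separating'' is also not a description of Apollonian networks: already $K_4$ has three non-separating inner face triangles, so no triangulation with an interior vertex satisfies your hypothesis. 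Finally, the copy $\mathcal D$ of $H$ may use Spoiler's circles, so an invariant phrased solely in terms of Builder's circles cannot suffice; and the ``potential function'' meant to handle multiple simultaneous triangles is left entirely unspecified.

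The paper's strategy is quite different and does not fill gaps at all. After Builder places $\Omega$, Spoiler places a circle $\Gamma$ extremely close to $\Omega$ but \emph{not tangent} to it (\Cref{lem:bigcircle}). The geometric content is a width estimate (\Cref{lem:width}): because the gap between $\Omega$ and $\Gamma$ is so narrow, any ring of circles surrounding one of them while excluding the other must use more than $\Delta(H)$ circles; a Descartes-circle bound (\Cref{lem:descartes}) rules out the alternative that one fits in a triangular pocket next to the other. This yields two combinatorial facts (\Cref{claim:main}): (i) no Spoiler circle is ever an inner vertex of a copy of $H$, and (ii) any Builder circle that is an inner vertex of $H$ was already surrounded by relevant circles \emph{at the moment it was placed}. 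From these one deduces (\Cref{claim:outerfirst,claim:apollonian}) that the three outer circles of $H$ are laid down first and every subsequent relevant circle lands in a triangular face of the relevant circles already present---exactly the recursive construction of an Apollonian network. The degree-blowup mechanism via a nearly-touching companion circle, not gap-filling, is the idea you are missing.
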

We also consider a biased version of the game. Let $\varepsilon \in (0, 1)$. In the $(1+\varepsilon):1$ \textit{biased $H$-subgraph circle packing game}, Builder and Spoiler still take turns placing circles, but every $\floor{1/\varepsilon}$ moves, Builder is given one additional move. The goal of each player remains the same. We show that any bias whatsoever gives Builder unlimited power:
\begin{theorem}\label{thm:biasedcircle}
Let $\varepsilon \in (0, 1)$. For any planar graph $H$, Builder wins the $(1+\varepsilon) : 1$ biased $H$-subgraph circle packing game. 
\end{theorem}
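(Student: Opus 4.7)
The plan is to use the Koebe–Andreev–Thurston circle packing theorem together with an \emph{exponential parallelism} strategy: Builder launches many independent attempts to realize a target packing, and the bias guarantees that at least one attempt survives Spoiler's interference. Setting $m = |V(H)|$, I would first invoke Koebe–Andreev–Thurston to fix a packing $\mathcal{P}^\star$ whose contact graph is $H$; since scaling preserves the contact graph, Builder is free to play $\mathcal{P}^\star$ at any size and location. Builder's strategy then has two phases. In the \emph{seeding phase}, over his first $N$ moves (for $N$ sufficiently large, depending on $\varepsilon$ and $m$), Builder starts $N$ distinct attempts: each is initiated by placing a scaled copy of the first circle of $\mathcal{P}^\star$ at a fresh location, carefully chosen to be disjoint from all existing circles and well-separated from other attempts. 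In the \emph{growth phase}, Builder extends alive attempts in $m-1$ rounds; in each round he visits each alive attempt and places its next circle according to $\mathcal{P}^\star$, declaring an attempt dead if Spoiler has already placed a circle blocking its next intended position.

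Assuming the key geometric claim that each Spoiler circle blocks at most one of Builder's attempts, the accounting is straightforward. Let $A_r$ denote the number of attempts alive at the start of round $r$; over round $r$ Builder makes $A_r$ moves while, by the bias, Spoiler makes at most $A_r/(1+\varepsilon) + O(1)$ moves and kills that many attempts. Hence $A_{r+1} \geq A_r \cdot \varepsilon/(1+\varepsilon) - O(1)$. Iterating through both phases yields $A_m$ bounded below by a quantity of the form $N(\varepsilon/(1+\varepsilon))^m - O(1)$, so taking $N$ to be a suitable constant times $((1+\varepsilon)/\varepsilon)^m$ forces $A_m \geq 1$: at least one attempt survives all $m-1$ growth rounds, realizing a complete copy of $\mathcal{P}^\star$ and thus containing $H$ as a subgraph of the contact graph.

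The hard part will be justifying the geometric claim that each Spoiler circle blocks at most one attempt. Since Spoiler can play arbitrarily large circles, Builder must space his attempts so that no single Spoiler circle can overlap multiple attempt regions. The plan is to choose each new attempt's location adaptively: at a distance from all existing circles much greater than the largest Spoiler radius played so far, and at a scale tiny relative to that distance. This ensures that no \emph{past} Spoiler circle can reach the new attempt. To handle the threat of a \emph{future} large Spoiler circle spanning multiple attempts, the attempts should be arranged hierarchically---for instance, at doubly-exponentially increasing distances from a fixed reference point---so that any Spoiler circle large enough to reach two different attempts would necessarily have to overlap some previously placed Builder circle, making the move illegal. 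Working out this geometric bookkeeping is the main technical content of the proof.
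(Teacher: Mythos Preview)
Your high-level plan---launch exponentially many parallel attempts and use a box-game style counting to guarantee one survives---is exactly the paper's strategy, and your arithmetic $N\approx((1+\varepsilon)/\varepsilon)^m$ matches the bound in the paper's Lemma on the biased box game. The difficulty you correctly isolate, that each Spoiler move should kill at most one attempt, is indeed the whole point.

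The gap is that your proposed geometric fix does not work. With one seed circle per attempt, no spacing scheme---hierarchical, doubly-exponential, or otherwise---prevents a single Spoiler disk from reaching two attempt regions. Concretely, take any two seeds $s_i,s_j$ of radii $r_i,r_j$ with attempt balls $B_i,B_j$ of radii $2r_i,2r_j$. A disk $C$ with $|C-s_i|\in[R+r_i,\,R+2r_i]$ and $|C-s_j|\in[R+r_j,\,R+2r_j]$ avoids both seeds yet meets both attempt balls; solving these constraints only fixes the difference $|C-s_i|-|C-s_j|$ to lie in a nonempty interval, and centers realizing any such difference exist along a branch of a hyperbola with foci $s_i,s_j$. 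Pushing $R$ large lets $C$ sit far from every other seed as well, since finitely many small disks cannot obstruct an arbitrarily large one. So the sentence ``any Spoiler circle large enough to reach two different attempts would necessarily have to overlap some previously placed Builder circle'' is false as stated: isolated seeds are not barriers.

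The paper sidesteps this entirely. Before starting any attempts, Builder spends roughly $m$ moves building a scaffold: place two tangent circles, then repeatedly add a circle tangent to two already-tangent circles. This produces at least $m$ bounded triangular interstices $F_1,\dots,F_m$ in the packing. Now each attempt lives inside its own $F_i$, and the crucial property comes for free: any disk Spoiler plays has connected interior and hence lies in at most one $F_i$. The game on the faces is then literally the $(1+\varepsilon):1$ biased $(m,n)$ box game, and the counting you already have finishes it. Replace your seeding phase with this scaffold phase and the proof goes through without any delicate geometric bookkeeping.
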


Finally, a natural question to ask is whether the edge drawing game and the circle packing game are different. Are there graphs $H$ for which Builder wins one but not the other? Our next result shows that, in the circle packing game, Builder has a strategy to construct arbitrarily large Apollonian networks. 

\begin{theorem}\label{thm:diameter}
Let $n \geq 3$. In the unbiased circle packing game, Builder has a strategy to construct an Apollonian network on $n$ vertices. 
\end{theorem}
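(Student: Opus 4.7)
The plan is to have Builder incrementally construct an Apollonian network through repeated Apollonian extensions: at each turn, Builder places a new circle inscribed in a curvilinear triangular face of his current Apollonian subgraph, tangent to all three boundary circles (i.e.\ the inner Soddy circle of the face). Combinatorially this is the standard Apollonian operation of adding a degree-$3$ vertex inside a triangular face. Builder wins as soon as his Apollonian subgraph reaches $n$ vertices.

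The core of the argument is a face-counting invariant. An Apollonian network on $v$ vertices has $2v-5$ internal triangular faces. After Builder has made $v$ total moves (his first $3$ forming $K_3$ and the remaining being Apollonian extensions), Spoiler has played at most $v$ moves, and each such move can render at most one triangular face \emph{unusable}---by placing an obstructing circle that intersects that face's inner Soddy position. Hence the number of usable faces is at least $2v-5-v = v-5$, which is positive once $v \geq 6$. From that point on, Builder can extend indefinitely: each further extension adds $2$ triangular faces net, while Spoiler can only eliminate at most one per round, so the surplus of usable faces grows linearly and Builder always has a face available to extend.

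The main obstacle is the \emph{initial bootstrap}: reaching $v=6$ from $K_3$ despite Spoiler's blocks, since the face-counting argument breaks down for $v \leq 5$ (e.g.\ after $K_3$ there is only a single face, which Spoiler can block with his third move). To handle this I would have Builder use \emph{multi-face} moves in the early rounds. For example, if Builder places $C_1, C_2, C_3$ so that $C_2, C_3$ are each tangent to $C_1$ but not to one another (producing no triangular face yet for Spoiler to block), then his fourth move places $C_4$ tangent to all three of them, creating two triangular faces $\{C_1, C_2, C_4\}$ and $\{C_1, C_3, C_4\}$ simultaneously. Spoiler's single response can block at most one, leaving one face available for Builder to continue Apollonian-extending; iterating variants of this maneuver should outpace Spoiler's blocking rate.

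An additional lever is a nice asymmetry: if Spoiler ever places the inner Soddy circle of a face, his circle is tangent to the three boundary circles and thereby performs a valid Apollonian extension \emph{for Builder's network} for free. So Spoiler's only meaningful blocks are suboptimal (non-Soddy) obstructing circles that do not contribute useful tangencies. I expect the hardest step to be making the initial bootstrap fully rigorous: a careful analysis showing that, through adaptively chosen positions (Builder has continuous freedom in selecting tangent circles and so can always avoid Spoiler's finitely many prior circles) together with a small repertoire of multi-face maneuvers, Builder can reliably drive his Apollonian subgraph past $6$ vertices, after which the clean face-counting lemma carries the construction up to any desired $n$.
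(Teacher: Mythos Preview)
Your maintenance argument (face-counting once $v \geq 6$) is sound and matches the paper's endgame: once Builder has a $K_4$ with clean faces, each Builder move nets two new triangular faces while Spoiler kills at most one, so Builder extends forever. The divergence is entirely in the bootstrap, and there your proposal has a genuine gap that you yourself flag but do not close.

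The ``continuous freedom'' claim fails exactly where you need it. Once $C_1, C_2, C_3$ are fixed, the circles externally tangent to all three form a \emph{finite} set (at most two, by Apollonius). Spoiler moves between $C_3$ and $C_4$ and can target those positions directly; you would need to argue that one Spoiler disk cannot cover both candidate positions, which is a nontrivial geometric statement depending on how Builder chose $C_1,C_2,C_3$ relative to $S_1,S_2$. Even granting that $C_4$ lands, the two new triangular faces may already contain $S_1,S_2,S_3$ --- a Spoiler circle need not intersect $C_4$'s position to sit inside one of the resulting curvilinear triangles and block its Soddy circle --- and then $S_4$ blocks whatever remains. Your sketch does not control this, and ``iterating variants of this maneuver'' is precisely the losing one-face-per-round race the bootstrap was supposed to escape.

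The paper's bootstrap is a different idea and is the real content of the proof. Builder's first substantive move is to place $\Omega_2$ at distance $\varepsilon$ from the existing $\Omega_1$ --- close but deliberately \emph{not} tangent. A parabolic estimate shows any circle fitting in the gap near the almost-tangency has diameter $O(\varepsilon + \delta x_0^2)$, so for $\varepsilon$ small enough no single Spoiler circle can straddle both sides of the gap. Builder then drops a small $\omega$ tangent to both $\Omega_1,\Omega_2$ on whichever side Spoiler left open; the two Soddy circles $\Gamma,\Gamma'$ of $(\Omega_1,\Omega_2,\omega)$ are separated by $\omega$ itself, so again Spoiler cannot hit both. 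This ``winning position'' yields two simultaneous $K_3$ threats with certifiably empty faces, from which a $K_4$ and then your face-counting take over. The counterintuitive step --- sacrificing a tangency to manufacture a narrow channel that one Spoiler disk cannot cross --- is what makes the bootstrap rigorous; your plan tries to bootstrap purely through tangencies, and that is exactly where it stalls.
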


It is known that the diameter of an Apollonian network grows linearly with the maximum depth of a face \cite{FT14}, so \Cref{thm:diameter} implies that Builder can construct graphs of arbitrarily large diameter. However, \Cref{thm:degree} shows that Builder cannot construct graphs of arbitrarily large diameter in the edge drawing game: if Spoiler adopts Builder's strategy in \Cref{thm:degree}, then Spoiler can force the final graph to have constant diameter. 

\section{Proof of \Cref{thm:ham21}}
There is a unique planar triangulation on 5 vertices, and this graph is Hamiltonian, so we may assume that $n \geq 6$. Builder employs the following strategy. 
\begin{itemize}
\item If no edges have been drawn (i.e. it is the first move of the game), then Builder draws a $K_{1,2}$.
\item If Spoiler draws an edge $\{u, v\}$ where the vertices $u$ and $v$ are isolated, and there exists an isolated vertex $x$, then Builder chooses an isolated vertex $x$ and draws edges $\{u,x\}$ and $\{v,x\}$, making sure not to surround any vertices while doing so.  
\begin{center}
\begin{asy}
size(3cm);
pair A = (0,0), B = dir(210), C = dir(-30);
draw(B--C, red); draw(B--A--C, blue+dashed);
dot("$x$", A, dir(90)); dot("$u$", B, dir(B)); dot("$v$", C, dir(C));
\end{asy}
\end{center}
\item If Spoiler joins an isolated vertex $x$ to a 3-cycle $(u, v, w)$, say via the edge $\{x, w\}$, Builder draws the edges $\{x, u\}$ and $\{x, v\}$ in such a way that there are no vertices inside the newly formed faces bounded by $(x, w, u)$ and $(x, w, v)$. 
\begin{center}
\begin{asy}
size(3cm);
pair A = dir(90), B = dir(150), C = dir(30), D = 1.8*A;
draw(A--B--C--A); draw(A--D, red); draw(B--D--C, blue+dashed);
dot("$w$", A, dir(-90)); dot("$u$", B, dir(210)); dot("$v$", C, dir(-30)); dot("$x$", D, dir(90));
\end{asy}
\end{center}
\item If Spoiler joins an isolated vertex $x$ to a $K_{1,2}$ with vertices $u,v,w$ and $u$ not adjacent to $v$, which can be done in two distinct ways, then Builder responds as pictured below in such a way that the newly formed faces bounded by $(u, v, w)$ and $(x, v, w)$ do not contain any vertices other than $u$. 
\begin{center}
\begin{asy}
size(8cm);
pair A = dir(90), B = dir(150), C = dir(30), D = 1.8*A, T = (5, 0), A1 = A+T, B1 = B+T, C1 = C+T, D1 = D+T;
draw(B--A--C); draw(D--A, red); draw(B--C, blue+dashed); draw(D..B+(-0.5, -0.5)..C, blue+dashed);
dot("$w$", A, dir(30)); dot("$u$", B, dir(210)); dot("$v$", C, dir(-30)); dot("$x$", D, dir(90));
draw(B1--A1--C1); draw(D1--C1, red); draw(D1--B1--C1, blue+dashed);
dot("$w$", A1, dir(-90)); dot("$u$", B1, dir(210)); dot("$v$", C1, dir(-30)); dot("$x$", D1, dir(90));
\end{asy}
\end{center}
\item If Spoiler joins a 3-cycle $(u, v, w)$ to a $K_{1,2}$ with vertices $u_1,v_1,w_1$ and $u_1$ not adjacent to $v_1$, which can be done in two distinct ways, then Builder responds as pictured below, again in such a way that the enclosed regions do not contain any other vertices. 
\begin{center}
\begin{asy}
picture l = new picture; picture r = new picture;
size(10cm);
pair A = dir(90), B = dir(210), C = dir(-30), T = (3, 0), A1 = A+T, B1 = B+T, C1 = C+T;
draw(l, A..B+(-1, 0)..B1+(0, -1)..C1, blue+dashed);
draw(l, A--B--C--A^^A1--B1--A1--C1); draw(l, A--A1, red); draw(l, B1--C1, blue+dashed);
draw(r, A..B+(-1, 0)..C+(0,-1)..C1+(1,0)..A1+(1,1)..B1, blue+dashed);
draw(r, A--B--C--A^^A1--B1--A1--C1); draw(r, C--B1, red); draw(r, B1--C1, blue+dashed);
dot(l, "$w$", A, dir(90)); dot(l, "$v$", B, dir(240)); dot(l, "$u$", C, dir(-60));
dot(l, "$w_1$", A1, dir(90)); dot(l, "$u_1$", B1, dir(240)); dot(l, "$v_1$", C1, dir(-60));
dot(r, "$w$", A, dir(90)); dot(r, "$v$", B, dir(240)); dot(r, "$u$", C, dir(-60));
dot(r, "$w_1$", A1, dir(90)); dot(r, "$u_1$", B1, dir(240)); dot(r, "$v_1$", C1, dir(-60));
add(shift(0,0)*l); add(shift(8,0)*r);
\end{asy}
\end{center}
\item If Spoiler joins two 3-cycles $(u, v, w)$ and $(u_1, v_1, w_1)$, say via the edge $\{u_1, u_2\}$, Builder draws the edges $\{w, w_1\}$ and $\{w, v_1\}$, as usual ensuring that there are no vertices in the faces bounded by $(u_1, u, v, w, v_1)$ and $(w, u, u_1, w_1)$. 
\begin{center}
\begin{asy}
picture l = new picture; picture r = new picture;
size(10cm);
pair A = dir(90), B = dir(210), C = dir(-30), T = (3, 0), A1 = A+T, B1 = B+T, C1 = C+T, A2 = 2.5*A, B2 = 2.5*B, C2 = 2.5*C;
draw(l, A..B+(-1, 0)..B1+(0, -1)..C1, blue+dashed);
draw(l, A--B--C--A^^A1--B1--C1--A1); draw(l, C--B1, red); draw(l, A--A1, blue+dashed); 
dot(l, "$w$", A, dir(90)); dot(l, "$v$", B, dir(240)); dot(l, "$u$", C, dir(-60));
dot(l, "$w_1$", A1, dir(90)); dot(l, "$u_1$", B1, dir(240)); dot(l, "$v_1$", C1, dir(-60));
draw(r, A--B--C--A^^A2--B2--C2--A2); draw(r, A--A2, red); draw(r, B2--C--C2, blue+dashed);
dot(r, "$u$", A, dir(30)); dot(r, "$v$", B, dir(240)); dot(r, "$w$", C, dir(50));
dot(r, "$u_1$", A2, dir(90)); dot(r, "$v_1$", B2, dir(240)); dot(r, "$w_1$", C2, dir(-60));
add(shift(0,0)*l); add(shift(8,0)*r);
\end{asy}
\end{center}
\end{itemize}
Now suppose we fall into none of the above cases. It's easy to see that each connected component $H$ of the current graph $G$ is either an isolated vertex, a $K_{1,2}$, or is such that each face of $H$ which contains other components of $G$ is a 3-cycle; moreover, there is at least one of the latter kind of component since $n \geq 6$---this situation may occur, for instance, on the very first move when Builder draws a $K_{1,2}$ and Spoiler completes the 3-cycle, enclosing other points while doing so. Builder should then proceed as follows. 
\begin{itemize}
\item If $G$ is connected, then Builder can play arbitrarily until the end of the game. 
\item If there is an isolated vertex $x$, join it to a 3-cycle $(u, v, w)$ by drawing edges $\{u, x\}$ and $\{x, v\}$ such that the face bounded by $(u, x, v, w)$ contains no vertices. 
\begin{center}
\begin{asy}
size(3cm);
pair A = dir(90), B = dir(150), C = dir(30), D = 1.8*A;
draw(A--B--C--A); draw(B--D--C, blue+dashed);
dot("$w$", A, dir(30)); dot("$u$", B, dir(210)); dot("$v$", C, dir(-30)); dot("$x$", D, dir(90));
\end{asy}
\end{center}
\item Else, if there are two 3-cycles $(u, v, w)$ and $(u_1, v_1, w_1)$ in different components, Builder should merge them into one component by drawing the edges $\{w, w_1\}$ and $\{w, v_1\}$ such that the newly formed face contains no vertices other than $u$, $v$, and $u_1$. 
\begin{center}
\begin{asy}
size(4cm);
pair A = dir(90), B = dir(210), C = dir(-30), T = (3, 0), A1 = A+T, B1 = B+T, C1 = C+T;
draw(A..B+(-1, 0)..B1+(0, -1)..C1, blue+dashed);
draw(A--B--C--A^^A1--B1--C1--A1); draw(A--A1, blue+dashed); 
dot("$w$", A, dir(90)); dot("$v$", B, dir(240)); dot("$u$", C, dir(-60));
dot("$w_1$", A1, dir(90)); dot("$u_1$", B1, dir(240)); dot("$v_1$", C1, dir(-60));
\end{asy}
\end{center}
\item Otherwise, there must be exactly two connected components. One is a $K_{1,2}$, and the other is a graph whose face containing the $K_{1,2}$ is bounded by a 3-cycle. Builder joins these two components as pictured. 
\begin{center}
\begin{asy}
size(4cm);
pair A = dir(90), B = dir(210), C = dir(-30), T = (3, 0), A1 = A+T, B1 = B+T, C1 = C+T;
draw(A--B--C--A^^A1--C1--B1); draw(A--A1^^C--B1, blue+dashed); 
dot("$w$", A, dir(90)); dot("$v$", B, dir(240)); dot("$u$", C, dir(-60));
dot("$w_1$", A1, dir(90)); dot("$u_1$", B1, dir(240)); dot("$v_1$", C1, dir(-60));
\end{asy}
\end{center}
\end{itemize}
We now show that this strategy works. 
\begin{definition}
Let $G$ be a plane drawing, and let $P$ be a graph property. We say that $G$ is \textit{eventually} $P$ if all plane triangulations $H$ with $V(H) = V(G)$ and $E(G) \subseteq E(H)$ satisfy $P$. 
\end{definition}
\begin{definition}
Let $G$ be a plane drawing whose outer face is a 3-cycle $(u_1, u_2, u_3)$. We say that $G$ is \textit{strongly Hamiltonian} if the following hold. 
\begin{itemize}
    \item $G$ has a Hamiltonian path from $u_i$ to $u_j$ for all $\{i, j\} \subseteq \{1, 2, 3\}$, $i \neq j$. In this case we use $u_i \stackrel G \rsg u_j$ to denote these paths. 
    \item $G - \{u_k\}$ has a Hamiltonian path from $u_i$ to $u_j$ for all $\{i, j, k\} = \{1, 2, 3\}$. In this case we use $u_i \stackrel G\rsg_{u_k} u_j$ to denote these paths, which we will call \textit{near-Hamiltonian} paths. 
\end{itemize}
\end{definition}
Note that $K_3$ is strongly Hamiltonian. We claim that throughout the game, Builder is able to maintain that all connected components whose outer face is a 3-cycle is eventually strongly Hamiltonian. It suffices to show inductively that this invariant holds in the following five configurations. 

\begin{enumerate}[label = \textit{Case \arabic*.}]

\item Let $(u, v, w)$ be the outer face of a component $H$ before $x$ becomes connected to it. By induction, assume that $H$ is eventually strongly Hamiltonian. Let $H'$ be the new component, i.e. $V(H') = V(H) \cup \{x\}$ and $E(H') = E(H) \cup \{\{u, x\}, \{w, x\}, \{v, w\}\}$. The outer face of $H'$ is $(u, v, x)$. Note that $H'$ eventually contains the edge $\{x, w\}$.
\begin{center}
\begin{asy}
size(3cm);
pair A = dir(90), B = dir(180), C = dir(0), D = 2*A;
draw(A--B--C--A); draw(B--D--C, blue); draw(D--A, blue+dashed);
dot("$w$", A, dir(30)); dot("$u$", B, dir(210)); dot("$v$", C, dir(-30)); dot("$x$", D, dir(90));
label("$H$", (A+B+C)/3);
\end{asy}
\end{center}
Then $H'$ is eventually strongly Hamiltonian, as witnessed by the following paths.
\begin{align*}
\text{Hamiltonian paths:} \quad &x \to u \stackrel H \rsg_v w \to v \\
&x \to v \stackrel H \rsg_u w \to u \\
&u \stackrel H \rsg_v w \to x \to v \\
\text{Near-Hamiltonian paths:} \quad &x \to w \stackrel H \rsg_u v \\
&x \to w \stackrel H \rsg_v u \\
&u \stackrel H \rsg v
\end{align*}

\item Let $(u, v, w)$ be the outer face of a component $H$, and let $(u_1, v_1, w_1)$ be a 3-cycle which contains $H$. Let $H_1$ be the graph whose outer face is the 3-cycle $(w, v_1, w_1)$. Finally, let $H'$ be the new component defined by $V(H') = V(H) \cup V(H_1) \cup \{u_1\}$ and $E(H') = E(H) \cup E(H_1) \cup \{\{u_1, v_1\}, \{u_1, u\}, \{u_1, w_1\}\}$. 
\begin{center}
\begin{asy}
size(4cm);
pair A = dir(90), B = dir(210), C = dir(-30), A2 = 2.5*A, B2 = 2.5*B, C2 = 2.5*C;
draw(A--B--C--A^^A2--B2--C2--A2); draw(A--A2^^B2--C--C2, blue);
dot("$u$", A, dir(30)); dot("$v$", B, dir(200)); dot("$w$", C, dir(50));
dot("$u_1$", A2, dir(90)); dot("$v_1$", B2, dir(240)); dot("$w_1$", C2, dir(-60));
label("$H$", (A+B+C)/3); label("$H'$", (A2+B2)/2 + (-0.4, 0)); label("$H_1$", C-(0,0.45));
\end{asy}
\end{center}
Assume inductively that $H$ and $H_1$ are eventually strongly Hamiltonian. Then $H'$ is as well, as witnessed by the following paths.
\begin{align*}
\text{Hamiltonian paths:} \quad &u_1 \to u \stackrel H \rsg w \stackrel{H_1}{\rsg} v_1 \\
\quad &u_1 \to u \stackrel H \rsg w \stackrel{H_1}{\rsg} w_1 \\
\quad &v_1 \to u_1 \to u \stackrel H \rsg w \stackrel{H_1}{\rsg}_{v_1} w_1 \\
\text{Near-Hamiltonian paths:} \quad &u_1 \to u \stackrel H \rsg w \stackrel{H_1}{\rsg}_{w_1} v_1 \\
&u_1 \to u \stackrel H \rsg w \stackrel{H_1}{\rsg}_{v_1} w_1 \\
&v_1 \stackrel{H_1}{\rsg}_{w_1} w \stackrel H \rsg u \to u_1 \to w_1
\end{align*}

\item Let $H$ and $H_1$ have outer faces $(u, v, w)$ and $(u_1, v_1, w_1)$ respectively which are positioned as shown. 
\begin{center}
\begin{asy}
size(4cm);
pair A = dir(90), B = dir(210), C = dir(-30), T = (3, 0), A1 = A+T, B1 = B+T, C1 = C+T;
draw(A..B+(-1, 0)..B1+(0, -1)..C1, blue);
draw(A--B--C--A^^A1--B1--C1--A1); draw(C--B1^^A--A1, blue); 
dot("$w$", A, dir(90)); dot("$v$", B, dir(240)); dot("$u$", C, dir(-60));
dot("$w_1$", A1, dir(90)); dot("$u_1$", B1, dir(240)); dot("$v_1$", C1, dir(-60));
label("$H$", (A+B+C)/3); label("$H_1$", (A1+B1+C1)/3);
\end{asy}
\end{center}
Let $H'$ be the merged component defined by $V(H') = V(H) \cup V(H_1)$ and $E(H') = E(H) \cup E(H_1) \cup \{\{u, u_1\}, \{w, w_1\}, \{w, v_1\}\}$. Assume inductively that $H$ and $H_1$ are eventually strongly Hamiltonian. Then $H'$ has the following Hamiltonian paths:
\begin{align*}
&w \stackrel H \rsg u \to u_1 \stackrel{H_1}\rsg w_1 \\
&w \stackrel H \rsg u \to u_1 \stackrel{H_1}\rsg v_1 \\
&w_1 \to w \stackrel H \rsg u \to u_1 \stackrel{H_1}\rsg_{w_1} v_1,
\end{align*}
and $H'$ has the following near-Hamiltonian paths:
\begin{align*}
&w \stackrel H \rsg u \to u_1 \stackrel{H_1}\rsg_{v_1} w_1 \\
&w \stackrel H \rsg u \to u_1 \stackrel{H_1}\rsg_{w_1} v_1.
\end{align*}
As $H'$ is drawn as above, it does not yet have a near-Hamiltonian path between $w_1$ and $v_1$. If the edge $\{v, v_1\}$ exists, then we would have the path
\[
w_1 \stackrel{H_1}\rsg_{v_1} u_1 \to u \stackrel H\rsg_w v \to v_1.
\]
Suppose $\{v, v_1\}$ does not exist. The only way this happens is that $v$ is surrounded by the edge $\{w, u_1\}$. 
\begin{center}
\begin{asy}
size(6cm);
pair A = dir(90), B = dir(210), C = dir(-30), T = (3, 0), A1 = A+T, B1 = B+T, C1 = C+T;
draw(A..B+(-1, 0)..B1+(0, -1)..C1, blue);
draw(A--B--C--A^^A1--B1--C1--A1); draw(C--B1^^A--A1, blue); 
draw(A..B+(-0.5, 0)..C+(0, -1)..B1, red);
draw(B..C+(0, -0.5)..B1, red+dashed); draw(C--A1, red+dashed);
dot("$w$", A, dir(90)); dot("$v$", B, dir(240)); dot("$u$", C, dir(-90));
dot("$w_1$", A1, dir(90)); dot("$u_1$", B1, dir(-50)); dot("$v_1$", C1, dir(-60));
label("$H$", (A+B+C)/3); label("$H_1$", (A1+B1+C1)/3);
\end{asy}
\end{center}
But in this case $\{v, u_1\}$ and $\{u, w_1\}$ are inevitable, giving rise to the path
\[
w_1 \to u \stackrel H\rsg_w v \to u_1 \stackrel{H_1}\rsg_{w_1} v_1.
\]
Thus $H'$ is eventually strongly Hamiltonian. 

\item Assume the same setup as in case 3 but without the edge $\{u, u_1\}$. If $\{u, u_1\}$ were to eventually exist, then we are done by case 3. The only way to prevent this is to surround $u$ by drawing $\{v, w_1\}$. Then $\{u, w_1\}$ and $\{v, u_1\}$ will eventually be drawn.
\begin{center}
\begin{asy}
size(6cm);
pair A = dir(90), B = dir(210), C = dir(-30), T = (3, 0), A1 = A+T, B1 = B+T, C1 = C+T;
draw(A..B+(-1, 0)..B1+(0, -1)..C1, blue);
draw(A--B--C--A^^A1--B1--C1--A1); draw(A--A1, blue); 
draw(B..C+(0, -0.4)..B1+(0, 0.5)..A1, red);
draw(C--A1, red+dashed);
draw(B..C+(0, -0.8)..B1, red+dashed);
dot("$w$", A, dir(90)); dot("$v$", B, dir(240)); dot("$u$", C, dir(-90));
dot("$w_1$", A1, dir(90)); dot("$u_1$", B1, dir(-50)); dot("$v_1$", C1, dir(-60));
label("$H$", (A+B+C)/3); label("$H_1$", (A1+B1+C1)/3);
\end{asy}
\end{center}
Thus $H'$ is eventually strongly Hamiltonian, as witnessed by the following paths.
\begin{align*}
\text{Hamiltonian paths:} \quad &w \stackrel H\rsg v \to u_1 \stackrel{H_1}\rsg w_1 \\
&w \stackrel H\rsg v \to u_1 \stackrel{H_1}\rsg v_1 \\
&w_1 \to w \stackrel H\rsg v \to u_1 \stackrel{H_1}\rsg_{w_1} v_1 \\
\text{Near-Hamiltonian paths:} \quad &w \stackrel H\rsg v \to u_1 \stackrel{H_1}\rsg_{v_1} w_1 \\
&w \stackrel H\rsg v \to u_1 \stackrel{H_1}\rsg_{w_1} v_1 \\
&w_1 \to u \stackrel H\rsg_w v \to u_1 \stackrel{H_1}\rsg_{w_1} v_1.
\end{align*}

\item Let $(u, v, w)$ be the outer face of a component $H$, which we assume is eventually strongly Hamiltonian. Join $H$ to a $K_{1, 2}$ as shown. 
\begin{center}
\begin{asy}
size(4cm);
pair A = dir(90), B = dir(210), C = dir(-30), T = (3, 0), A1 = A+T, B1 = B+T, C1 = C+T;
draw(A--B--C--A^^A1--C1--B1); draw(A--A1^^C--B1, blue); 
dot("$w$", A, dir(90)); dot("$v$", B, dir(240)); dot("$u$", C, dir(-60));
dot("$w_1$", A1, dir(90)); dot("$u_1$", B1, dir(240)); dot("$v_1$", C1, dir(-60));
label("$H$", (A+B+C)/3);
\end{asy}
\end{center}
As discussed before, when this happens all the vertices in the game are either in $H$ or in $\{u_1, v_1, w_1\}$. Therefore it is enough to show that the above graph is Hamiltonian, which it clearly is:
\[
w \stackrel H\rsg u \to u_1 \to v_1 \to w_1 \to w.
\]
\end{enumerate}
This concludes the proof that Builder wins in the 2:1 game. 

\section{Proof of \Cref{thm:ham13}}
For all $i$, after Spoiler's $i$th move we will keep track of a particular subset $S_i$ of the vertices. If Spoiler moves first, Spoiler constructs a 3-cycle $(u, v, w)$ surrounding one vertex inside it. Put $S_1 = \{u, v, w\}$. 

If Builder moves first, drawing the edge $\{u,v\}$, Spoiler picks two isolated vertices $x,y$ and construct a 3-cycle $(u, x, y)$ surrounding $v$ inside it. Put $S_1 = \{u, x, y\}$.

Now suppose Spoiler has just made her $i$th move. Let $G_i$ be the current graph. We assume inductively that $G_i$ is comprised of isolated vertices and exactly one other component $H_i$ whose outer face is bounded by a 3-cycle $(u,v,w)$. We case on Builder's possible moves. 
\begin{itemize}
\item Suppose Builder draws the edge $\{x, u\}$, where $x$ is a vertex on the outer face of $H_i$. Spoiler then chooses vertices $a$ and $b$ on the outer face of $H_i$, provided they exist, and with her first two moves draws $\{x, v\}$ and $\{x, w\}$, surrounding $a$ and $b$ in the faces $(x, u, v)$ and $(x, u, w)$ as shown below. On Spoiler's third move, draw $\{x, a\}$; this is just a waiting move. Put $S_{i+1} = S_i \cup \{x\}$.  
\begin{center}
\begin{asy}
size(4cm);
pair u = dir(90), v = dir(210), w = dir(-30), x = u + (0,0.5), a = u+(-0.6, 0), b = u+(0.6,0);
draw(u--v--w--u);
draw(u--x, red);
draw(v..a+(-0.3,0.3)..x..b+(0.3,0.3)..w, blue+dashed);
draw(x--a, blue+dashed);
dot("$u$", u, dir(20));
dot("$v$", v, dir(v));
dot("$w$", w, dir(w));
dot("$x$", x, dir(x));
dot("$a$", a, dir(a));
dot("$b$", b, dir(b));
label("$H$", (u+v+w)/3);
\end{asy}
\end{center}
\item Suppose Builder draws the edge $\{x, y\}$ where $x$ and $y$ are both on the outer face of $H_i$. Then Spoiler draws $\{x, u\}$, $\{x, v\}$, and $\{x, w\}$, enclosing $y$ in the face bounded by $(x, u, v)$ and another vertex $b$ in the face bounded by $(x, u, w)$.  Put $S_{i+1} = S_i \cup \{x\}$.
\end{itemize}
Now say Builder does neither of the above.
\begin{itemize}
\item If there is some isolated vertex $x$ on the outer face of $H_i$, then Spoiler draws $\{x, u\}$, $\{x, v\}$, and $\{x, w\}$, surrounding one vertex inside each of the faces bounded by $(x, u, v)$ and $(x, b, w)$. Put $S_{i+1} = S_i \cup \{x\}$.
\item Otherwise, Spoiler can play arbitrarily until the end of the game. In this case, put $S_j = S_i$ for all $j \geq i$. 
\end{itemize}
We now show that Spoiler's strategy works. Let $G$ be the graph at the end of the game, $k$ the number of moves Spoiler makes in total, and $T \subseteq H_k$ the subgraph induced by $S_k = S$. By construction $T$ is a planar triangulation and as such has $2|S|-4$ faces. All but at most two of the faces of $T$ encircle a vertex, so $3|S|-6 \leq n \leq 3|S|-4$. Then for $n$ large enough, there are more vertices inside faces of $T$ than there are vertices of $T$. It follows that deleting $S$ disconnects $G$ into more than $|S|$ components, so $G$ cannot be Hamiltonian. 

\section{Proof of \Cref{thm:degree}}
Let $v$ be the vertex which Builder nominates before the start of the game. 
\begin{definition}
Let $H$ be a graph, and let $F$ be a face of $H$ bounded by a 3-cycle, one of whose vertices is $v$. Denote by $\mathcal P(H, N, M)$ the graph $G$ defined by
\begin{itemize}
\item $V(G) = V(H) \sqcup V(N) \sqcup V(M)$, and
\item $E(G) = E(H) \sqcup \{\{u, v\} : u \in N\} \sqcup E(M)$, 
\end{itemize}
where $N$ is an independent set whose vertices lie entirely in $F$, and $M$ is a matching whose vertex set is disjoint from those of $H$ and $N$. If in addition to the above properties, $F$ contains isolated vertices, then we say that $F$ is an \textit{active face} with respect to $G$. (Remark that a given graph, such as a 3-cycle, may have more than one active face.) For brevity, we say ``$F$ is an active face" to mean that it is an active face with respect to $\mathcal P(H, N, M)$ for some $H,N,M$. 
\end{definition}
\begin{center}
\begin{asy}
size(9cm);
pair v = dir(-90), u = dir(150), w = dir(30), b = 2*w-u, a = (b+w)/2, c = 2*b-a, d = 2*c-b;
draw(v--u--w--v--a^^b--v--d);
draw(shift((b+c)/2)*xscale(1.8)*yscale(0.5)*unitcircle);
pair m1 = (-2,-0.5), m2 = (-2,0), m3 = (-2.5,-0.5), m4 = (-2.5,0), m5 = (-4.5,-0.5), m6 = (-4.5,0);
draw(m1--m2^^m3--m4^^m5--m6);
draw(shift((-3.25, -0.25))*xscale(1.8)*yscale(0.7)*unitcircle);
label("$\cdots$", (-3.5,-0.25));
label("$M$", m6+(-0.4, 0.4));
dot(m1^^m2^^m3^^m4^^m5^^m6);
dot(u^^w^^a^^b^^d);
// dot("1", a, dir(90)); dot("2", b, dir(90)); dot("$k$", d, dir(90));
dot("$v$", v, dir(-90));
label("$H$", (u+v+w)/3);
label("$\cdots$", c);
label("$N$", d+(0.5,0.5));
label("$F$", u+(-0.8,0.5));
\end{asy}
\end{center}

It is simple to check that if Builder (resp. Spoiler) moves first, then after Builder's third (resp. second) move, the resulting plane drawing will have at most two faces $F_1$ and $F_2$, and these satisfy the property that for each $i \in \{1, 2\}$, if $F_i$ contains any isolated vertices, then $F_i$ is an active face. 

\begin{definition}
Define a \textit{default move} as follows: Builder chooses an isolated vertex $u$ in an active face and draws $\{u, v\}$. 
\end{definition}
We distinguish two phases of Builder's strategy. In phase 1, Builder has at least one default move at his disposal. In phase 2, Builder no longer has any default moves. 

\subsection{Phase 1 strategy}
As long as Builder has a default move to play (i.e. the game is in phase 1), Builder has a strategy to inductively maintain the following invariant after each of his moves:
\begin{enumerate}
\item $\Omega(n)$ of all vertices in $V$ are either Builder-adjacent to $v$ or are isolated.
\item All isolated vertices lie in some active face. 
\item For each active face $F$ and corresponding subgraph $\mathcal P(H, N, M)$, if $M$ is nonempty, then $N$ is nonempty. 
\item There is at least one active face.
\end{enumerate}

We now present the strategy. If Spoiler does not play a move in an active face $A$, then Builder plays a default move. In what follows, we assume Spoiler moves in an active face with respect to $\mathcal P(H, N, M)$. 
\begin{itemize}
\item Suppose Spoiler joins $v$ to an isolated vertex $x$. This move does not break the invariant, so Builder can play any default move $\{v, y\}$. 
\begin{center}
\begin{asy}
size(5cm);
pair v = dir(-90), u = dir(150), w = dir(30), b = 2*w-u, a = (b+w)/2, c = 2*b-a, d = 2*c-b;
draw(v--u--w--v--a^^b--v);
draw(v--c, red);
draw(v--d, blue+dashed);
dot("$x$", c, dir(90));
dot("$y$", d, dir(90));
dot(u^^w^^a^^b^^d);
dot("$v$", v, dir(-90));
label("$H$", (u+v+w)/3);
\end{asy}
\end{center}
\item Suppose Spoiler draws $\{x, y\}$, where $x$ and $y$ are both isolated vertices. If there exists an isolated vertex $z$ in $A$ (i.e. $A$ is still an active face), then Builder plays $\{v, z\}$ for the sake of maintaining the invariant. Otherwise, Builder plays a default move. 
\item Suppose Spoiler draws $\{x, y\}$, where $x$ and $y$ are adjacent to $v$. This splits $A$ into two new faces $A_1$ and $A_2$. Let $S_1$ be the set of isolated vertices in $A_1$, and $S_2$ be the set of isolated vertices in $A_2$. Note that either $A_1$ or $A_2$ is an active face; without loss of generality, say $A_1$ is an active face, as shown in the diagram below. Let $N$ be the set of vertices adjacent to $v$ which lie in $A_1$. Then $A_1$ is active with respect to $\mathcal P(H', N, \emptyset)$, where $H'$ is the graph induced by the vertices $V \setminus (S_1 \cup N)$.  
\begin{center}
\begin{asy}
size(5cm);
pair v = dir(-90), u = dir(150), w = dir(30), b = 2*w-u, a = (b+w)/2, c = 2*b-a, d = 2*c-b;
draw(v--u--w--v--a^^b--v--d);
draw(w..a+(0,0.6)..b, red);
dot(u^^w^^a^^b^^d);
dot("$v$", v, dir(-90));
dot("$A_1$", a, dir(20));
label("$A_2$", (u+w)/2 + (0,0.5));
dot("$x$", w, dir(110));
dot("$y$", b, dir(70));
label("$H$", (u+v+w)/3);
label("$\cdots$", c);
\end{asy}
\end{center}
At this point, the invariant is temporarily violated since all the vertices in $S_2$ no longer lie in an active face. But Builder can remedy this by making a move as shown below, splitting $A_2$ into two new faces $A_{2,1}$ and $A_{2,2}$ so that all vertices in $S_2$ now lie in $A_{2,2}$, and $A_{2,2}$ is an active face with respect to $\mathcal P(H'', \emptyset, \emptyset)$, where $H''$ is the graph induced by the closed neighborhood of $v$. 
\begin{center}
\begin{asy}
size(5cm);
pair v = dir(-90), u = dir(150), w = dir(30), b = 2*w-u, a = (b+w)/2, c = 2*b-a, d = 2*c-b;
draw(v--u--w--v--a^^b--v--d);
draw(w..a+(0,0.4)..b, red);
draw(u..a+(0,1.1)..d, blue+dashed);
dot(u^^w^^a^^b^^d);
dot("$v$", v, dir(-90));
dot("$x$", w, dir(110));
dot("$y$", b, dir(70));
label("$H$", (u+v+w)/3);
label("$\cdots$", c);
label("$A_{2,1}$", a+(0,0.7));
label("$A_{2,2}$", u+(-0.5,0.5));
\end{asy}
\end{center}
\item Suppose Spoiler draws $\{x, y\}$, where $x$ is adjacent to $v$ and $y$ is an isolated vertex. Let $S$ be the set of isolated vertices in $A$. Builder can draw $\{v, y\}$ as shown below, splitting $A$ into two new faces $A_1$ and $A_2$ such that all vertices in $S$ lie in $A_2$, and $A_2$ is an active face with respect to $\mathcal P(H', \emptyset, \emptyset)$, where $H'$ is the graph induced by the closed neighborhood of $v$. Evidently this preserves the invariant. 
\begin{center}
\begin{asy}
size(5cm);
pair v = dir(-90), u = dir(150), w = dir(30), b = 2*w-u, a = (b+w)/2, c = 2*b-a, d = 2*c-b, y = a+(0,0.5);
draw(v--u--w--v--a^^b--v--d);
draw(y..(u+w)/2+(0,0.5)..u+(-0.5,0)..v, blue+dashed);
draw(d--y, red);
dot(u^^w^^a^^b^^d);
dot("$v$", v, dir(-90));
dot("$x$", d, dir(90));
dot("$y$", y, dir(90));
label("$H$", (u+v+w)/3);
label("$\cdots$", c);
\end{asy}
\end{center}
\item Finally, suppose Spoiler draws $\{x, y\}$, where $x$ lies in $M$ (the picture shows $y \in N$, but this need not be the case; perhaps $y \in M$ or otherwise). Let $S$ be the set of isolated vertices in $A$. By playing this move, Spoiler is threatening to surround $v$ in one move. To prevent this and simultaneously preserve the invariant, Builder plays $\{w, z\}$ as shown, where $w \in H$ and $z \in N$. Since $A$ is an active face and $M$ is nonempty, the invariant guarantees that such $z$ exists. 
\begin{center}
\begin{asy}
size(5cm);
pair v = dir(-90), u = dir(150), w = dir(30), b = 2*w-u, a = (b+w)/2, c = 2*b-a, d = 2*c-b, m1 = b+(0,0.5), m2 = c+(0,0.5);
draw(v--u--w--v--a^^b--v);
draw(m1--m2);
draw(v--d);
draw(m1--a, red);
draw(u..w+(0,1)..a+(0,1)..m2+(0.2,0.2)..d, blue+dashed);
label("$\cdots$", c);
dot(m2);
dot("$x$", m1, dir(135));
dot("$w$", u, dir(135));
dot("$y$", a, dir(120));
dot("$z$", d, dir(45));
dot(u^^w^^a^^b^^d);
dot("$v$", v, dir(-90));
label("$H$", (u+v+w)/3);
label("$A_1$", w+(0,0.5));
label("$A_2$", u+(0.5,1.2));
\end{asy}
\end{center}
This move splits $A$ into two new faces $A_1$ and $A_2$. Builder makes sure that all vertices in $S$ lie in $A_2$, and that $A_2$ is an active face with respect to $\mathcal P(H', \emptyset, \emptyset)$, where $H'$ is the graph induced by $V(\mathcal P(H, N, M))$. Moreover, Builder that the new face $A_1$ does not surround any vertices of $M$ except $x$ and $y$ (if it were the case that $y \in M$). 
\end{itemize}
This concludes Builder's phase 1 strategy. 

\subsection{Phase 2 strategy}
Suppose now that the game has reached phase 2, i.e. it is Builder's turn, and Builder has run out of default moves. Let $G$ be the current graph. By item (2) of the phase 1 invariant, there are no more isolated vertices in $G$. Together with item (1) of the invariant, this implies that $\Omega(n)$ of all vertices in $V(G)$ are Builder-adjacent to $v$. That is, Builder's strategy in phase 2 no longer has to worry about the Builder-degree condition. 

Moreover, by examining the various cases of Builder's strategy in phase 1, we see that the only vertices which might possibly not be adjacent to $v$ in $G$ are either in $M$ or are ``localized" in the following sense: Builder's strategy guarantees that $G$ has a subgraph $H$ which satisfies the following property: for all vertices $x \in H$, if $x$ is not adjacent to $v$, then $x$ lies in a face $F$ incident with $v$ such that all but at most a constant number of the vertices in $F$ are adjacent to $v$. This ``worst case" scenario occurs in the last case, where Spoiler draws $\{x, y\}$, where $x$ and $y$ are both in $M$. Clearly this property ensures that every component of $G$ has constant diameter. 

Builder now plays as follows: if $M$ is empty, then $G$ is connected, and we are done (so Builder may move arbitrarily for the remainder of the game). If $H$ is not a triangulation, then Builder joins an arbitrary pair of points in $H$ and responds to Spoiler as in phase 1. If $H$ is a triangulation and $N$ is nonempty, then Builder joins a vertex of $y \in N$ and a vertex $x$ on the outer face of $H$ as shown below. 

\begin{center}
\begin{asy}
size(5cm);
pair v = dir(-90), u = dir(150), w = dir(30), b = 2*w-u, a = (b+w)/2, c = 2*b-a, d = 2*c-b;
draw(v--u--w--v--a^^b--v--d);
draw(u..a+(0,1.1)..d, blue+dashed);
dot(u^^w^^a^^b^^d);
dot("$v$", v, dir(-90));
dot("$y$", d, dir(45));
dot("$x$", u, dir(135));
label("$H$", (u+v+w)/3);
label("$\cdots$", c);
\end{asy}
\end{center}

We may therefore assume that the active face under consideration is $\mathcal P(H, \emptyset, M)$, where $H$ is a triangulation and $M$ is nonempty. Builder chooses a vertex $x \in M$ and plays $\{v, x\}$, as shown below. 

\begin{center}
\begin{asy}
size(6cm);
pair v = dir(-90), u = dir(150), w = dir(30);
draw(v--u--w--v);
pair m1 = (-2,-0.5), m2 = (-2,0), m3 = (-2.5,-0.5), m4 = (-2.5,0), m5 = (-4.5,-0.5), m6 = (-4.5,0);
draw(m1--m2^^m3--m4^^m5--m6);
label("$\cdots$", (-3.5,-0.25));
label("$M$", (-3.5, 0.25));
dot(m1^^m2^^m3^^m4^^m5^^m6);
dot(u^^w);
dot("$v$", v, dir(-90));
dot("$x$", m1, dir(-90));
draw(v--m1, blue+dashed);
label("$H$", (u+v+w)/3);
\end{asy}
\end{center}
Note that Spoiler cannot surround $v$ on her next move because there is no path (which does not contain $v$ itself) of length 2 in the outer face of $H$. Let $e$ be the edge that Spoiler plays. Let $H'$ be the connected component containing $v$. There are two cases.
\begin{itemize}
\item The outer face of $H'$ is a triangle. 
\begin{center}
\begin{asy}
size(6cm);
pair v = dir(-90), u = dir(150), w = dir(30);
draw(v--u--w--v);
pair m1 = (-2,-0.5), m2 = (-2,0), m3 = (-2.5,-0.5), m4 = (-2.5,0), m5 = (-4.5,-0.5), m6 = (-4.5,0);
draw(m1--m2^^m3--m4^^m5--m6);
label("$\cdots$", (-3.5,-0.25));
label("$M$", (-3.5, 0.25));
dot(m1^^m2^^m3^^m4^^m5^^m6);
dot(u^^w);
dot("$v$", v, dir(-90));
dot("$x$", m1, dir(-90));
draw(v--m1);
label("$H$", (u+v+w)/3);
draw(m1..((m2+m4)/2)..(u+(0, 0.6))..w, red+dashed);
label("$e$", u + (-0.2, 0.8), red);
draw(v--m2, blue+dashed);
label("$e'$", (v+m2)/2 + (0, 0.3), blue);
\end{asy}
\end{center}
Then no matter Spoiler's choice of $e$ (one example is shown above), Builder will be able to respond with $e'$ by joining two vertices in $H'$. 
\item The outer face of $H'$ is not a triangle. 
\begin{center}
\begin{asy}
size(6cm);
pair v = dir(-90), u = dir(150), w = dir(30);
draw(v--u--w--v);
pair m1 = (-2,-0.5), m2 = (-2,0), m3 = (-2.5,-0.5), m4 = (-2.5,0), m5 = (-4.5,-0.5), m6 = (-4.5,0);
draw(m1--m2^^m3--m4^^m5--m6);
label("$\cdots$", (-3.5,-0.25));
label("$M$", (-3.5, 0.25));
dot(m1^^m2^^m3^^m4^^m5^^m6);
dot(u^^w);
dot("$v$", v, dir(-90));
dot("$x$", m1, dir(-90));
draw(v--m1);
label("$H$", (u+v+w)/3);
draw(m2--m4, red+dashed);
label("$e$", (m2+m4)/2 + (0, 0.2), red);
draw(m1..(m3+(0,-0.2))..(m4+(-0.2,0.2))..(u+(-0.2,0.5))..w, blue+dashed);
label("$e'$", u+(-0.2, 0.8), blue);
\end{asy}
\end{center}
Then no matter how Spoiler moved (one example is shown above), Builder can play an edge $e'$ such that the outer face of the component containing $v$ is once again a triangle. If $e = \{a, b\}$ where $a,b \in M$, then Builder should play $e'$ in such a way that surrounds $a,b$. 
\end{itemize}
This guarantees the constant diameter condition. 

\section{Proof of \Cref{thm:apollonian}}
We first state a lemma about Apollonian networks. 
\begin{lemma}\label{apollonian}
Let $G$ be an Apollonian network, and fix a drawing. Choose a face of $G$ and replace it with an Apollonian network $H$, resulting in another graph $G'$. Then $G'$ is also an Apollonian network.
\end{lemma}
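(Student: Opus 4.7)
The plan is to induct on $|V(H)|$. The base case $H = K_3$ is immediate: replacing $F$ with $K_3$ just identifies the three vertices of $K_3$ with the boundary vertices of $F$, so $G' = G$ is Apollonian by hypothesis.

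For the inductive step with $n := |V(H)| \geq 4$, fix the outer triangle $T$ of $H$ that gets identified with $\partial F$. The key subclaim is that there exists a vertex $v \in V(H) \setminus T$ with $\deg_H(v) = 3$ such that $H^- := H - v$ is itself an Apollonian network with outer face $T$. To justify this, one observes that Apollonian networks have a tree-like recursive structure on the sphere: any triangular face of $H$ can play the role of the starting $K_3$ in a recursive construction. Taking $T$ as the starting triangle and adding the remaining $n - 3$ vertices one at a time, the last-added vertex has degree $3$ in $H$ (no subsequent addition increases its degree) and lies outside $T$; this is our $v$.

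Given the subclaim, let $G'^-$ denote the graph obtained by replacing $F$ in $G$ with $H^-$. By the inductive hypothesis, $G'^-$ is Apollonian. Now $v$ was added inside some internal face $F_v$ of $H^-$, and since $H^-$ is embedded into the interior of $F$, this $F_v$ remains a face of $G'^-$. Thus $G'$ arises from $G'^-$ by adding $v$ inside $F_v$ and joining it to the three boundary vertices of $F_v$, which is precisely the recursive step in the definition of an Apollonian network. Hence $G'$ is Apollonian, completing the induction.

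The main obstacle is the subclaim about the existence of an internal degree-$3$ vertex, since one must ensure both that the vertex we peel off has degree exactly $3$ in $H$ and that it avoids the boundary $T$. Once we establish that any Apollonian network with a designated outer face admits a recursive construction starting from that face---a standard consequence of the tree decomposition of $3$-trees---the rest of the argument is routine.
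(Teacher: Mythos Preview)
Your argument is correct and rests on the same core fact as the paper's proof: an Apollonian network $H$ admits a construction sequence starting from any prescribed triangular face, in particular from the outer triangle $T$ that gets identified with $\partial F$. You package this as an induction on $|V(H)|$, peeling off the last-added vertex at each step; the paper instead observes that once $H$ has such a $T$-rooted construction sequence, one can simply append it to a construction sequence for $G$, so no separate induction is needed. The substantive difference is that the paper actually \emph{proves} the re-rooting fact---via an ``illegal moves'' argument that takes an arbitrary construction sequence and iteratively modifies it until the outer face stays fixed at $T$ throughout---whereas you defer it to the standard theory of $3$-trees. Your citation is legitimate (any $k$-clique of a $k$-tree can serve as the root), but since this re-rooting step is essentially the entire content of the lemma, a short self-contained argument would strengthen the write-up.
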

\begin{proof}
It suffices to show that any Apollonian network $G$ can be drawn such that the 3 vertices on the outer face are fixed throughout. Let $G_0, G_1, \ldots, G_n = G$ be a construction of $G$, where $G_i$ is obtained by adding a new vertex in a face of $G_{i-1}$, connecting it to each vertex of the face, and $G_0$ is a 3-cycle with $V(G_0) = \{a, b, c\}$. 

Say for some $i$ that the outer face of $G_{i-1}$ is $\{a, b, c\}$ but the outer face of $G_i$ is $\{a, b, d\}$, where $\{d\} = V(G_i) \setminus V(G_{i-1})$. Let's call such an index $i$ an \textit{illegal move}. Suppose that the sequence $G_0, \ldots, G_n$ has the minimum possible number of illegal moves. If this number is 0, then we are done; hence assume there is at least 1 illegal move. 

Now construct a new sequence $G_0', \ldots, G_{i-1}', G_i, \ldots, G_n = G$ where $G_0$ is a 3-cycle with $V(G_0') = \{a, b, d\}$ and $G_k'$ is defined by $V(G_k') = V(G_{k-1}) \cup \{d\}$ and $E(G_k') = E(G_{k-1}) \cup \{\{a, d\}, \{b, d\}, \{c, d\}\}$ for all $1 \leq k \leq i-1$. We thus obtain a new sequence with one fewer illegal move. 
\end{proof}
Now return to the theorem. Let $G_i$ be the graph after Spoiler's $i$th move. We exhibit a strategy of Spoiler so that for all $i$, every component of $G_i$ is a subgraph of an Apollonian network. Clearly this implies the theorem. Spoiler reacts to Builder's $(i+1)$th move as follows:
\begin{itemize}
\item If Builder joins two isolated points of $G_i$, then Spoiler likewise joins two isolated points. 
\item Let $H$ be a component of $G_i$ which we assume inductively is a subgraph of an Apollonian  network whose outer face is a 3-cycle $(u, v, w)$. Suppose Builder joins $H$ to an isolated point $x$, say via $\{x, u\}$. Then Spoiler draws $\{x, v\}$, surrounding $w$ (and no other points) in the process. 
\item Suppose Builder joins $H$ and an isolated edge $\{x, y\}$, say along $\{u, x\}$. Then Spoiler responds as pictured below at left, such that the newly formed face doesn't contain any other vertices. Let $H'$ be the new component, where $V(H') = V(H) \cup \{x, y\}$ and $E(H') = E(H) \cup \{\{x, y\}, \{y, u\}, \{u, x\}\}$. By induction we assume that Spoiler has a strategy such that, whenever Builder plays a move inside $H$, Spoiler can respond such that $H$ remains a subgraph of an Apollonian network. Moreover, note that there is a unique planar triangulation on 5 vertices, and it is an Apollonian network. It follows  by \Cref{apollonian} that Builder has a strategy to ensure that $H'$ is always a subgraph of an Apollonian network.  
\begin{center}
\begin{asy}
picture l = new picture; picture ll = new picture;
size(9cm);
pair A = dir(90), B = dir(210), C = dir(-30), T = (3, 0), A1 = A+T, B1 = B+T, C1 = C+T;
pair D = 2*A - C, E = 2*A - B;
draw(ll, A--B--C--A^^D--E); draw(ll, A--D, red); draw(ll, E..C+(0.5,0)..B+(-0.5,0)..A, blue+dashed);
dot(ll, "$u$", A, dir(60)); dot(ll, "$v$", B, dir(240)); dot(ll, "$w$", C, dir(60)); dot(ll, "$x$", D, dir(135)); dot(ll, "$y$", E, dir(45));
draw(l, A..B+(-1, 0)..B1+(0, -1)..C1, blue+dashed);
draw(l, A--B--C--A^^A1--B1--C1--A1); draw(l, A--A1, red);
dot(l, "$w_1$", A, dir(90)); dot(l, "$v_1$", B, dir(240)); dot(l, "$u_1$", C, dir(-60));
dot(l, "$w_2$", A1, dir(90)); dot(l, "$u_2$", B1, dir(240)); dot(l, "$v_2$", C1, dir(-60));
label(l, "$H_1$", (A+B+C)/3); label(l, "$H_2$", (A1+B1+C1)/3);
label(ll, "$H$", (A+B+C)/3);
add(shift(5,1)*l); add(shift(0,0)*ll);
\end{asy}
\end{center}
\item Suppose Builder joins $H_1$ and $H_2$ along $\{w_1, w_2\}$, where $H_1$ and $H_2$ are both subgraphs of Apollonian networks whose outer faces are the 3-cycles $(u_1, v_1, w_1)$ and $(u_2, v_2, w_2)$. Then Spoiler responds as pictured above at right, such that the enclosed regions do not contain any other vertices. Let $H$ be the new component; that is, $V(H) = V(H_1) \cup V(H_2)$ and $E(H) = E(H_1) \cup E(H_2) \cup \{\{w_1, v_2\}, \{v_2, w_2\}, \{w_2, w_1\}\}$. For $i \in \{1, 2\}$, inductively assume that Spoiler has a strategy such that, if Builder plays a move in $H_i$, then Spoiler can respond such that $H_i$ remains a subgraph of an Apollonian network. So it suffices to specify how Spoiler should respond if Builder draws an edge between a vertex in $H_1$ and a vertex in $H_2$. Note that there are two nonisomorphic planar triangulations on 6 vertices, pictured in \Cref{fig:planar6}. 
\begin{figure}[ht]
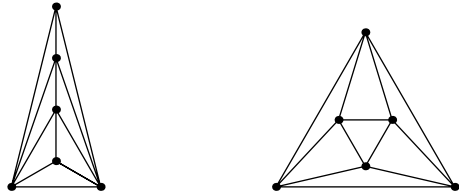

\begin{asy}
size(6cm);
picture l = new picture; picture r = new picture;
pair A = dir(90), B = dir(210), C = dir(-30), D = (0, 0), E = 2*A, F = 3*A;
pair X = 2*A, Y = 2*B, Z = 2*C, X1 = -0.3*X, Y1 = -0.3*Y, Z1 = -0.3*Z;
draw(l, B--D--C--B--A--C--E--B--F--E--A--D--C--F);
dot(l, A^^B^^C^^D^^E^^F);
draw(r, X--Y--Z--X1--Y--Z1--Y1--X1--Z1--X--Y1--Z--X);
dot(r, X^^Y^^Z^^X1^^Y1^^Z1);
add(shift(0,0)*l); add(shift(6,0.5)*r);
\end{asy}
\centering
\caption{The two planar triangulations on 6 vertices}
\label{fig:planar6}
\end{figure}
The one at left that is an Apollonian network has the distinguishing feature of containing a $K_4$. Therefore as long as Spoiler can respond to Builder's moves in $H$ such that a $K_4$ appears, then the subgraph induced by $\{v_1, w_1, u_1, v_2, w_2, u_2\}$ will remain a subgraph of an Apollonian network. It's not hard to see that this can be done. Combining with the inductive hypothesis, \Cref{apollonian} then ensures that, at this point, Spoiler has a strategy such that $H$ will remain a subgraph of an Apollonian network. 

\item If Builder draws an edge between two vertices in the same component $H$, then Spoiler should likewise play within $H$. If $H$ is eventually an Apollonian network, then Spoiler can draw an arbitrary edge. 
\end{itemize}
Spoiler is therefore able to maintain the inductive invariant throughout the course of the game, proving the theorem. 

\section{Proof of \Cref{thm:circle}}
In this section, all tangencies are \textit{external} tangencies. 
\begin{definition}\label{def:dist}
Let $\mathcal C = \{\Omega_1, \ldots, \Omega_n\}$ be a circle packing. The \textit{degree} of a circle $\Omega_i$ (with respect to $\mathcal C$) is the degree of $i$ in $\mathbf G(\mathcal C)$. 

Given a circle packing $\mathcal C$ and circles $\Omega_1, \Omega_2 \in \mathcal C$ centered at $O_1, O_2$ with radii $r_1, r_2$ respectively, the \textit{distance} between $\Omega_1$ and $\Omega_2$ is defined to be the quantity $O_1O_2 - r_1 - r_2$ (where $O_1O_2$ denotes the distance between $O_1$ and $O_2$). 
\end{definition}

\begin{definition}
Let $\omega_0, \ldots, \omega_{n-1}$ and $\Omega$ be circles. We say that $\{\omega_i\}_{i=0}^{n-1}$ \textit{surrounds} $\Omega$ if:
\begin{itemize}
    \item $\mathbf G\left(\{\omega_i\}_{i=0}^{n-1}\right)$ is an induced cycle: for all $0 \leq i < j \leq n-1$, $\omega_i$ and $\omega_j$ are tangent if and only if $j \equiv i + 1 \pmod n$; and
    \item $\omega_i$ is tangent to $\Omega$ for all $0 \leq i \leq n-1$.
\end{itemize}
Accordingly, given a circle packing $\mathcal C$ and $\Omega \in \mathcal C$, we say that $\Omega$ is an \textit{inner circle} in $\mathcal C$ if there exist $\{\omega_i\}_{i=0}^{n-1} \subseteq \mathcal C$ which surrounds $\Omega$. Say that $\Omega$ is an \textit{outer circle} in $\mathcal C$ if $\Omega$ touches the outer face of $\mathcal C$. 
\end{definition}

\begin{figure}[h]
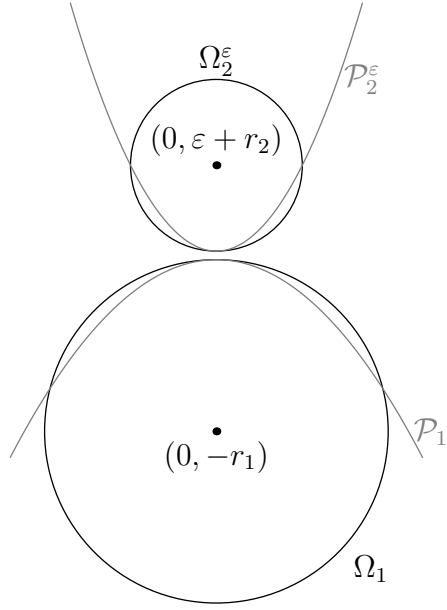

\begin{asy}
size(8cm);
import graph;
real r1 = 2, r2 = 1, eps = 0.1, del1 = 0.4, del2 = 1;
pair O1 = (0, -r1), O2 = (0, eps + r2);
dot("$(0, -r_1)$", O1, S);
dot("$(0, \varepsilon + r_2)$", O2, N);
draw(circle(O1, r1));
draw(circle(O2, r2));
real P1(real x) { return -del1 * x * x; }
real P2(real x) { return eps + del2 * x * x; }
draw(graph(P1, -2.4, 2.4), gray);
draw(graph(P2, -1.7, 1.7), gray);
label("$\Omega_1$", 0.9 * (r1, -2 * r1));
label("$\Omega_2^\varepsilon$", 1.1 * (0, eps + 2 * r2));
label("$\mathcal P_1$", (2.5, -r1), gray);
label("$\mathcal P_2^\varepsilon$", (1.7, eps + r2 + 1), gray);
\end{asy}
\centering
\caption{Filling the gap between nearly touching circles}
\label{fig:width}
\end{figure}

\begin{lemma}\label{lem:width}
Let $r_1, r_2 > 0$. For all $d \in \mathbb N$, there exists $\varepsilon > 0$ such that if 
\begin{itemize}
    \item $\Omega_1$ and $\Omega_2$ are internally disjoint circles of radii $r_1$ and $r_2$, and $\varepsilon$ is the distance between $\Omega_1$ and $\Omega_2$; and
    \item $\{\omega_i\}_{i=0}^{n-1}$ surrounds $\Omega_1$, and the face bounded by $\{\omega_i\}_{i=0}^{n-1}$ does not contain $\Omega_2$,
\end{itemize}
then $n \geq d$. We denote $\varepsilon = \text{width}(r_1, r_2, d)$.
\end{lemma}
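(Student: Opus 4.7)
The plan is to reduce to the case of two concentric circles via an inversion, where the desired bound becomes a standard Steiner-chain count.

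\emph{Step 1 (Inversion to concentric).} Since $\Omega_1, \Omega_2$ are externally disjoint, the coaxial pencil they generate has two distinct real limit points; let $L$ be the one lying strictly inside $\Omega_1$. Inverting about $L$ with any positive radius sends the configuration to an image in which the images $\Omega_1', \Omega_2'$ of $\Omega_1, \Omega_2$ are concentric circles of radii $R_{\mathrm{out}} > R_{\mathrm{in}}$, with $\Omega_2'$ nested properly inside $\Omega_1'$. Because $L$ lies interior to $\Omega_1$ and exterior to $\Omega_2$, the inversion swaps the interior and exterior of $\Omega_1$ while preserving those of $\Omega_2$. Consequently each $\omega_i'$ (the image of $\omega_i$) is contained in the annulus $R_{\mathrm{in}} \leq |z| \leq R_{\mathrm{out}}$, internally tangent to $\Omega_1'$, and disjoint from the interior of $\Omega_2'$; the pairwise-disjoint interiors and induced-cycle tangency structure of the $\omega_i$ transfer to the $\omega_i'$.

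\emph{Step 2 (Steiner-chain lower bound).} Each $\omega_i'$ has radius at most $(R_{\mathrm{out}} - R_{\mathrm{in}})/2$, since a larger radius would force it to meet $\Omega_2'$. From the common center of the concentric circles, each $\omega_i'$ casts an angular shadow of half-extent at most $\arcsin\!\bigl((R_{\mathrm{out}}-R_{\mathrm{in}})/(R_{\mathrm{out}}+R_{\mathrm{in}})\bigr)$; since the cycle surrounds $\Omega_2'$, any ray from the center must cross some $\omega_i'$, so these shadows tile $[0, 2\pi)$ and
\[
n \;\geq\; \frac{\pi}{\arcsin\!\bigl((R_{\mathrm{out}}-R_{\mathrm{in}})/(R_{\mathrm{out}}+R_{\mathrm{in}})\bigr)}.
\]

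\emph{Step 3 (relating to $\varepsilon$).} The inversive distance $\delta = (|O_1 O_2|^2 - r_1^2 - r_2^2)/(2 r_1 r_2)$ is preserved in absolute value by inversion. A direct calculation gives $\delta = 1 + \varepsilon(r_1+r_2)/(r_1 r_2) + O(\varepsilon^2)$ in the original, and for two concentric circles $|\delta| = (R_{\mathrm{in}}^2 + R_{\mathrm{out}}^2)/(2 R_{\mathrm{in}} R_{\mathrm{out}})$. Setting $t = R_{\mathrm{out}}/R_{\mathrm{in}}$, the relation $(1+t^2)/(2t) = 1 + \Theta(\varepsilon)$ forces $(t-1)^2 = \Theta(\varepsilon)$, hence $(R_{\mathrm{out}}-R_{\mathrm{in}})/(R_{\mathrm{out}}+R_{\mathrm{in}}) = \Theta(\sqrt\varepsilon)$ and $n = \Omega(1/\sqrt\varepsilon) \to \infty$ as $\varepsilon \to 0$. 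Choosing $\varepsilon$ sufficiently small in terms of $d, r_1, r_2$ yields $n \geq d$.

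The main technical hurdle I anticipate is the bookkeeping in Step 1: one must carefully verify that the inversion correctly places the $\omega_i'$'s inside the annulus with \emph{internal} tangency to $\Omega_1'$ (rather than outside $\Omega_1'$ or inside $\Omega_2'$), which relies on checking that $L$ lies outside each $\omega_i$. Once the concentric picture is established, Step 2 is a textbook Steiner-chain count and Step 3 is a short calculation using the inversive-distance invariant.
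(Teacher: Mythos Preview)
Your argument is correct, and it proceeds along a genuinely different line from the paper's proof. The paper works locally and analytically: it places $\Omega_1,\Omega_2$ in a standard position, replaces each by an osculating parabola near the point of closest approach, and then argues that the chain of $\omega_i$'s must traverse a horizontal distance $x_0$ through a gap of height at most $\varepsilon + (\delta_1+\delta_2)x_0^2$, yielding $k \ge x_0/(2(\varepsilon + (\delta_1+\delta_2)x_0^2))$, which can be made larger than $d$. Your approach instead exploits M\"obius symmetry: inversion about a limit point of the coaxial pencil sends the pair to concentric circles, after which an angular Steiner-chain count gives the clean closed form $n \ge \pi/\arcsin\bigl((R_{\mathrm{out}}-R_{\mathrm{in}})/(R_{\mathrm{out}}+R_{\mathrm{in}})\bigr)$, and the inversive-distance invariant translates this back to $n = \Omega(1/\sqrt\varepsilon)$. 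The paper's route is more self-contained (no inversive machinery), while yours is more conceptual, gives a sharp explicit inequality, and makes the $1/\sqrt\varepsilon$ rate transparent. Two small remarks: your word ``tile'' in Step~2 should be ``cover'' (the shadows may overlap, but the covering inequality is all you need); and your use of ``disjoint from the interior of $\Omega_2'$'' relies on the $\omega_i$ being internally disjoint from $\Omega_2$ in the original picture, which is an implicit circle-packing hypothesis shared with the paper's own proof (the paper likewise bounds the radius of $\omega_i$ by the gap height, which needs exactly this). The Step~1 bookkeeping you flagged does go through cleanly, since $L$ lies inside $\Omega_1$ and hence outside every externally tangent $\omega_i$, so inversion preserves each $\omega_i$'s interior while swapping that of $\Omega_1$.
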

\begin{proof}
Fix $r_1, r_2 > 0$ and $d \in \mathbb N$. Let $\Omega_1$ be centered at $(0, -r_1)$ with radius $r_1$ which passes through $(0,0)$, and for $\varepsilon > 0$, denote by $\Omega_2^\varepsilon$ the circle centered at $(0, r_2 + \varepsilon)$ which passes through $(0, \varepsilon)$. We approximate these circles by parabolas near 0. Fix constants $\delta_1, \delta_2, a > 0$ such that, over the interval $(-a, a)$, the curve $x \mapsto -\delta_1x^2$ lies below the upper half of $\Omega_1$ and the curve $x \mapsto \varepsilon + \delta_2x^2$ lies above the lower half of $\Omega_2^\varepsilon$, as shown in \Cref{fig:width}. Let $\mathcal P_1$ and $\mathcal P_2^\varepsilon$ denote these two parabolas, respectively.

Now using the fact that
\[
\frac{b}{2(\varepsilon + (\delta_1 + \delta_2)b^2)} \to \infty \quad \text{as} \quad (b, \varepsilon) \to (0, 0),
\]
we obtain $M > 0$ such that for all $\|(b, e)\| < M$, it holds that 
\[
\frac{b}{2(\varepsilon + (\delta_1 + \delta_2)b^2)} > d.
\]
Fix some $b , \varepsilon > 0$ with $\|(b, \varepsilon)\| < M$. Without loss of generality, assume $\omega_0$ intersects the line $x = 0$. Let $x_0 = \min\{a, b\}$, and let $\ell$ be the vertical line $x = x_0$. Then $\omega_k$ intersects $\ell$ for some $0 \leq k \leq n-1$. For all $0 \leq i \leq k-1$, since the $x$-coordinate of the center of $\omega_i$ is at most $x_0$, the radius of $\omega_i$ is at most $\varepsilon + (\delta_1 + \delta_2)x_0^2$, which is the vertical distance between $\mathcal P_1$ and $\mathcal P_2^\varepsilon$. It follows that
\[
k \geq \frac{x_0}{2(\varepsilon + (\delta_1 + \delta_2)x_0^2)} > d,
\]
as we wanted. 
\end{proof}

The following lemma is an immediate consequence of Descartes' circle theorem. 
\begin{lemma}\label{lem:descartes}
Suppose $\Omega$ lies in the face bounded by the mutually tangent circles $\omega_1, \omega_2, \omega_3$, as shown in \Cref{fig:descartes}. Then
\[
\frac1r > \frac{1}{r_1} + \frac{1}{r_2} + \frac{1}{r_3},
\]
where $r,r_1,r_2,r_3$ are the radii of $\Omega, \omega_1, \omega_2, \omega_3$ respectively. 
\end{lemma}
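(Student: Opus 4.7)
The plan is to reduce the claim to (the inner-circle case of) Descartes' circle theorem. Recall that for four mutually externally tangent circles with curvatures $k_i = 1/r_i$, Descartes' theorem states
\[
(k_1 + k_2 + k_3 + k_4)^2 = 2(k_1^2 + k_2^2 + k_3^2 + k_4^2).
\]
Viewing this as a quadratic in $k_4$ with $k_1,k_2,k_3$ fixed, the two roots correspond to the two Soddy circles tangent to all three of $\omega_1,\omega_2,\omega_3$. The larger root,
\[
k^\ast = k_1 + k_2 + k_3 + 2\sqrt{k_1 k_2 + k_2 k_3 + k_3 k_1},
\]
corresponds to the inner Soddy circle $\Omega^\ast$, which is inscribed in the bounded curvilinear triangle $\mathcal T$ cut out by the three arcs of $\omega_1,\omega_2,\omega_3$.

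Next I would show that $\Omega^\ast$ is the unique circle of maximum radius contained in $\mathcal T$. Since $\mathcal T$ is compact with nonempty interior, a maximum-radius inscribed circle exists. If it were not tangent to some $\omega_i$, one could translate it slightly away from the remaining two arcs and then enlarge it, contradicting maximality. Hence the maximizer is tangent to all three arcs; by uniqueness of the inner Soddy configuration it must be $\Omega^\ast$.

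Now $\Omega$ is contained in $\mathcal T$ by hypothesis, so $r \le r^\ast$, and therefore
\[
\frac{1}{r} \;\ge\; \frac{1}{r^\ast} \;=\; \frac{1}{r_1} + \frac{1}{r_2} + \frac{1}{r_3} + 2\sqrt{\frac{1}{r_1 r_2} + \frac{1}{r_2 r_3} + \frac{1}{r_3 r_1}} \;>\; \frac{1}{r_1} + \frac{1}{r_2} + \frac{1}{r_3},
\]
with strict inequality because the square-root term is strictly positive. The only real subtlety — and the step I would write out most carefully — is the maximality argument for $\Omega^\ast$; everything else is bookkeeping on Descartes' formula. (Alternatively, one can bypass the maximality lemma by invoking the well-known fact that the inner Soddy circle is the unique circle inscribed in $\mathcal T$ tangent to all three bounding arcs, and then observing that any circle properly contained in $\mathcal T$ can be expanded until it meets all three arcs without ever leaving $\mathcal T$, the expansion terminating at $\Omega^\ast$.)
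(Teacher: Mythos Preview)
Your argument is correct and follows the same route the paper intends: the paper simply asserts the lemma as ``an immediate consequence of Descartes' circle theorem'' without further detail, and you have supplied exactly those details --- identifying the inner Soddy curvature $k^\ast = k_1+k_2+k_3+2\sqrt{k_1k_2+k_2k_3+k_3k_1}$ and then arguing that any circle in the curvilinear triangle has radius at most $r^\ast$. The maximality step (a largest inscribed circle must touch all three arcs, hence equals $\Omega^\ast$) is the right way to make the ``immediate'' precise; your parenthetical alternative is vaguer and I would drop it in favor of the compactness-plus-perturbation argument you already sketched.
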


\begin{figure}
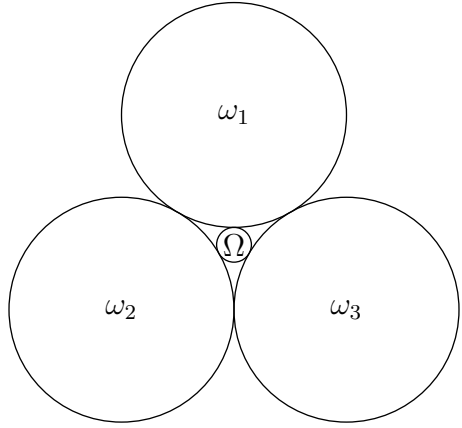

\begin{asy}
size(6cm);
import geometry;
pair A = dir(90), B = dir(210), C = dir(330), O = (0, 0), D = (B+C)/2, E = (C+A)/2, F = (A+B)/2;
real r = abs(A - B)/2; 
draw(circle(A, r)^^circle(B, r)^^circle(C, r)^^circle(O, 1 - r));
label("$\omega_1$", A);
label("$\omega_2$", B);
label("$\omega_3$", C);
label("$\Omega$", O);
\end{asy}
\centering
\caption{Bounds on radii of pairwise tangent circles}
\label{fig:descartes}
\end{figure}

\begin{lemma}\label{lem:bigcircle}
Let $d \in \mathbb N$ and let $\mathcal C \neq \emptyset$ be a circle packing with $\Omega \in \mathcal C$. Suppose $\Omega$ is not an inner circle in $\mathcal C$. Then there exists a circle $\Gamma = \Gamma(\mathcal C, \Omega, d)$ such that the following hold:
\begin{enumerate}
    \item $\mathcal C \cup \{\Gamma\}$ is a circle packing.
    \item For any circle packing $\mathcal D \supseteq \mathcal C \cup \{\Gamma\}$, the following hold: if $\Omega$ is an inner circle in $\mathcal D$, then the degree of $\Omega$ is at least $d$; if $\Gamma$ is an inner circle in $\mathcal D$, then the degree of $\Gamma$ is at least $d$. 
\end{enumerate}
\end{lemma}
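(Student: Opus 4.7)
Since $\Omega$ is not inner in $\mathcal{C}$, it touches the outer face, so a small arc of $\partial\Omega$ is accessible from outside. The plan is to place $\Gamma$ in the outer face, close to but disjoint from $\Omega$, and use Lemma~\ref{lem:width} to force any surround of either $\Omega$ or $\Gamma$ to pass through the narrow gap between them and therefore to use many small circles.

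Concretely, fix a radius $r_\Gamma > 0$ and set
\[
\varepsilon = \min\{\text{width}(r_\Omega, r_\Gamma, d),\ \text{width}(r_\Gamma, r_\Omega, d)\},
\]
shrinking $\varepsilon$ further if needed so that the $\varepsilon$-neighborhood of the accessible arc of $\partial\Omega$ meets no other circle of $\mathcal{C}$. Place $\Gamma$ of radius $r_\Gamma$ in the outer face at distance exactly $\varepsilon$ from $\Omega$; this immediately verifies condition~(1).

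For condition~(2), suppose $\mathcal{D} \supseteq \mathcal{C} \cup \{\Gamma\}$ is a packing in which $\Omega$ is inner, witnessed by a surround $\{\omega_i\}_{i=0}^{n-1}$. Because the distance between $\Omega$ and $\Gamma$ is positive, $\Gamma$ is not tangent to $\Omega$, hence not one of the $\omega_i$. If the face bounded by $\{\omega_i\}$ does not contain $\Gamma$, then Lemma~\ref{lem:width} applied with $(\Omega_1, \Omega_2) = (\Omega, \Gamma)$ yields $n \geq d$, so $\deg(\Omega) \geq n \geq d$. A symmetric argument using $\text{width}(r_\Gamma, r_\Omega, d)$ handles the case that $\Gamma$ is inner and the bounded face of its surround omits $\Omega$.

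The main obstacle is the remaining configuration in which a surround of $\Omega$ bounds a face containing $\Gamma$ (or the analogous situation with $\Omega$ and $\Gamma$ swapped), since Lemma~\ref{lem:width} does not apply. The plan for this case is a geometric obstruction: each surrounder $\omega_i$ has its center at distance $r_\Omega + r_i$ from the center of $\Omega$, and the no-overlap condition with $\Gamma$ (computed via the law of cosines) severely restricts the admissible radii and angular positions of surrounders on the $\Gamma$-side of $\Omega$. I expect that choosing $r_\Gamma$ sufficiently large compared to $r_\Omega$ (and exceeding every radius in $\mathcal{C}$) makes it impossible for any induced cycle of circles tangent to $\Omega$ to enclose $\Gamma$ while remaining disjoint from it, thereby ruling out the exceptional configuration and reducing the argument to the width-lemma case treated above.
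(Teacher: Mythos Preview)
Your outline captures the right two-case split (surround excludes the other circle vs.\ surround contains it) and correctly identifies Lemma~\ref{lem:width} as the tool for the first case. But the treatment of the second case has a genuine gap, and your proposed fix cannot work in general.

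You plan to rule out the ``surround of $\Omega$ contains $\Gamma$'' configuration by taking $r_\Gamma$ large. The problem is that $\Omega$ may be accessible from the outer face only through a narrow channel between circles of $\mathcal C$; in that situation there is \emph{no} large circle you can legally place near $\Omega$. This is not a corner case you can wave away: it is exactly the scenario the paper isolates as its second case. There the paper lets $r_*$ be the maximum radius of any circle that can be placed tangent to $\Omega$ at an accessible point, and when $r_* \le r_\Omega$ it chooses $r_\Gamma = r_* - \eta$ for a carefully computed $\eta = r_*^2/(r_*+r_\Omega)$. The point of this choice is that if $\Gamma$ ends up trapped in a triangular pocket $\omega_i,\omega_{i+1},\Omega$, Descartes' inequality (Lemma~\ref{lem:descartes}) forces $r_{\omega_i} > r_*$, contradicting maximality of $r_*$. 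Your proposal has no analogue of this argument.

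Even in the easy case where a large $\Gamma$ \emph{can} be placed, note that ``$r_\Gamma$ large'' handles the configuration ``surround of $\Omega$ contains $\Gamma$'' (a big disc cannot sit in a curvilinear triangle with $\Omega$ as one side), but it does \emph{not} obviously handle ``surround of $\Gamma$ contains $\Omega$'': the surrounding circles live in $\mathcal D$, not $\mathcal C$, so bounding $r_\Gamma$ against radii in $\mathcal C$ buys nothing. The paper's clean trick here is to take $r_\Gamma = r_\Omega$, so that Descartes gives $1/r_\Omega > 1/r_\Gamma$ (or vice versa) immediately, a contradiction in either direction. Replacing your ``I expect'' paragraph with the Descartes-based arguments of the paper's two cases would close the gap.
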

\begin{proof}
For $P \in \Omega$, say that $P$ is a \textit{gap point} if either
\begin{itemize}
    \item there does not exist a minor arc $\wideparen{T_1T_2}$ such that $P \in \wideparen{T_1T_2}$ and there exist circles $\omega_1, \omega_2 \in \mathcal C$ which are tangent to each other and to $\Omega$ at $T_1$ and $T_2$; or
    \item there exists $\omega \in \mathcal C$ tangent to $\Omega$ at $P$.
\end{itemize}
Denote by $\text{GapPts} \subseteq \Omega$ the set of gap points, and note that GapPts is closed. For example, in \Cref{fig:gappoints}, the gap points of $\Omega$ are marked in bold. 
\begin{figure}
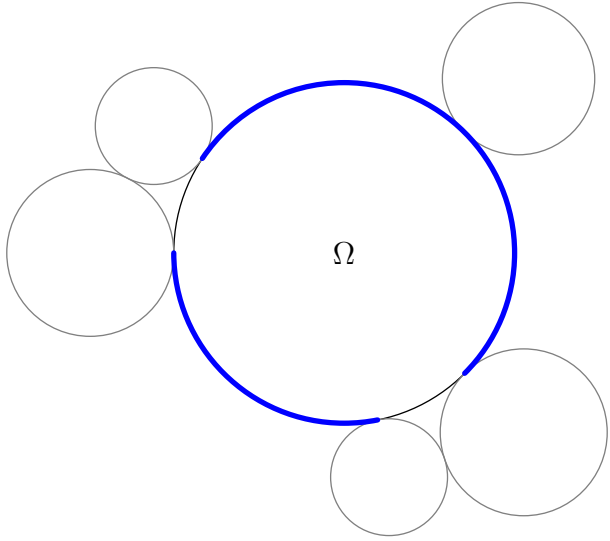

\begin{asy}
size(8cm);
import geometry;
real rot = 135;
pair O = (0, 0), P = (-4, 0), Q = (-3, 2);
pair I = (-2.68474, 0.812874);
pair D = (I + reflect(P, O) * I)/2;
pair E = (I + reflect(Q, O) * I)/2;
pair F = (I + reflect(P, Q) * I)/2;
pair R = rotate(rot) * P, S = rotate(rot) * Q;
real r0 = abs(O - E), r1 = abs(Q - F), r2 = abs(P - D), r3 = 1.2;
pair T = (r0 + r3, r0 + r3) * (1 / sqrt(2));
draw(circle(O, r0));
draw(circle(Q, r1)^^circle(P, r2)^^circle(S, r1)^^circle(R, r2)^^circle(T, r3), gray);
label("$\Omega$", O);
draw(arc(O, r0, aTan(R.y/R.x), aTan(Q.y/Q.x) + 180), linewidth(2) + blue);
draw(arc(O, r0, aTan(S.y/S.x), aTan(P.y/P.x) + 180, CW), linewidth(2) + blue);
\end{asy}
\centering
\caption{Gap points of $\Omega$}
\label{fig:gappoints}
\end{figure}

For each $P \in \text{GapPts}$, let $R(P) \subseteq \mathbb R$ be the set of all $r \geq 0$ for which there exists a circle $\omega$ of radius $r$ tangent to $\Omega$ at $P$ such that $\mathcal C \cup \{\omega\}$ is a circle packing. Let $r_\Omega$ be the radius of $\Omega$. 

We split into two cases. Suppose first that there is some $P \in \text{GapPts}$ such that either $R(P)$ is unbounded or $\max R(P) > r_\Omega$. This implies that for $\varepsilon > 0$ sufficiently small, there exists a circle $\Gamma$ of radius $r_\Omega$ such that $\mathcal C \cup \{\Gamma\}$ is a circle packing and the distance between $\Omega$ and $\Gamma$ is $\varepsilon$. In particular, choose $\varepsilon$ to be smaller than $\text{width}(r_\Omega, r_\Omega, d)$. 

We claim that $\Gamma$ satisfies item (2). Let $\mathcal D \supseteq \mathcal C \cup \{\Gamma\}$ be a circle packing, and suppose $\Omega$ is an inner circle in $\mathcal D$ (the argument is the same if instead $\Gamma$ is an inner circle in $\mathcal D$). Let $\{\omega_i\}_{i=0}^{n-1} \subseteq \mathcal D$ surround $\Omega$. Let $F$ be the face bounded by $\{\omega_i\}_{i=0}^{n-1}$. 
\begin{itemize}
    \item If $\Gamma$ does not lie in $F$, then we are done by \Cref{lem:width}.
    \item If $\Gamma$ lies in $F$, then in particular $\Gamma$ lies in the face bounded by $\omega_i$, $\omega_{i+1}$, and $\Omega$ for some $i$ (indices taken modulo $n$). This is impossible by \Cref{lem:descartes} and the fact that $\Gamma$ and $\Omega$ have the same radius. 
\end{itemize}

Now suppose that $R(P)$ is bounded and $\max R(P) \leq r_\Omega$ for all $P \in \text{GapPts}$. Define 
\[
T = \argmax_{P \in \text{GapPts}} (\max R(P)) \in \text{GapPts}, \quad r_* = \max R(T), \quad \eta = \frac{r_*^2}{r_* + r_\Omega}.
\]
Note that $T$ exists since $\text{GapPts} \subseteq \Omega$ is closed. By definition of $T$, there exists a circle $\Gamma'$ with center $O$ and radius $r_*$ tangent to $\Omega$ at $T$ such that $\mathcal C \cup \{\Gamma'\}$ is a circle packing. Let $\Gamma$ be the circle whose center lies on the segment $\overline{OT}$ with radius $r_\Gamma = r_* - \eta$ such that the distance between $\Gamma$ and $\Omega$ is $\varepsilon = \min\{\eta, \text{width}(r_\Omega, r_\Gamma, d)\}$. Note that $\Gamma$ is contained entirely inside $\Gamma'$, so $\mathcal C \cup \{\Gamma\}$ is a circle packing. 

It suffices to show that $\Gamma$ satisfies item (2). Let $\mathcal D \supseteq \mathcal C \cup \{\Gamma\}$ be a circle packing. Suppose first that $\Gamma$ is an inner circle in $\mathcal D$. Let $\{\omega_i\}_{i=0}^{n-1} \subseteq \mathcal D$ surround $\Gamma$. Let $F$ be the face bounded by $\{\omega_i\}_{i=0}^{n-1}$. As before, if $\Omega$ does not lie in $F$, then we are done by \Cref{lem:width}. Otherwise, $\Omega$ must lie in the face bounded by $\omega_i$, $\omega_{i+1}$, and $\Gamma$ for some $i$. But this violates \Cref{lem:descartes} as
\[
r_\Gamma = r_* - \eta < r_* \leq r_\Omega,
\]
by the assumption that $\max R(T) \leq r_\Omega$. 

Suppose instead that $\Omega$ is an inner circle in $\mathcal D$. Let $\{\omega_i\}_{i=0}^{n-1} \subseteq \mathcal D$ surround $\Omega$. Let $F$ be the face bounded by $\{\omega_i\}_{i=0}^{n-1}$. Once again, if $\Omega$ does not lie in $F$, then we are done by \Cref{lem:width}. So suppose $\Omega$ lies in $F$. Then $\Gamma$  must lie in the face bounded by $\omega_i$, $\omega_{i+1}$, and $\Gamma$ for some $i$. Since $T \in \text{GapPts}$, at least one of $\omega_i$ and $\omega_{i+1}$ is not in $\mathcal C$. Without loss of generality, assume $\omega_i \notin \mathcal C$. Let $\omega_i$ be tangent to $\Omega$ at $T_i$. From $T \in \text{GapPts}$, it follows that $T_i \in \text{GapPts}$. By \Cref{lem:descartes}, we have
\[
\frac{1}{r_\Gamma} > \frac{1}{r_\Omega} + \frac{1}{r_{\omega_i}} + \frac{1}{r_{\omega_{i+1}}} > \frac{1}{r_\Omega} + \frac{1}{r_{\omega_i}},
\]
where $r_\omega$ is the radius of $\omega$, etc., which rearranges to
\[
r_{\omega_i} > \left(\frac{1}{r_\Gamma} - \frac{1}{r_\Omega}\right)^{-1} =\left(\frac{1}{r_* - \eta} - \frac{1}{r_\Omega}\right)^{-1} = \left(\frac{1}{r_* - \frac{r_*^2}{r_* + r_\Omega}} - \frac{1}{r_\Omega}\right)^{-1} = r_*,
\]
which contradicts the definition of $r_*$. 
\end{proof}

We are now ready to present Spoiler's strategy. Let $H$ be a planar triangulation with $|V(H)| \geq 3$, and suppose Builder's goal is to construct $H$. Let $\Delta$ be the maximum degree in $H$. Suppose Builder places a circle $\Omega$, and the current circle packing is $\mathcal C$. If $\Omega$ is an inner circle in $\mathcal C$, then Spoiler plays an arbitrary circle not adjacent to anything else. Otherwise, Spoiler plays a circle $\Gamma = \Gamma(\mathcal C, \Omega, \Delta + 1)$ as specified by \Cref{lem:bigcircle}. We now show this strategy works, in the sense that if Builder can successfully construct $H$, then $H$ must be an Apollonian network. 

Suppose Builder has a strategy to build $H$. This means there is a sequence of moves
\[
\Omega_1, \Gamma_1, \Omega_2, \Gamma_2, \ldots, \Omega_N, \Gamma_N,
\]
such that $H$ is a subgraph of $\mathbf G(\mathcal C_N)$, where:
\begin{itemize}
\item $\mathcal C_i = \{\Omega_1, \Gamma_1, \ldots, \Omega_i, \Gamma_i\}$ is the position after both players have played $i$ moves,
\item $\{\Omega_i\}_{i=1}^N$ are the circles played by Builder,
\item $\{\Gamma_i\}_{i=1}^N$ are the circles played by Spoiler, and
\item each $\Gamma_i$ is played according to Spoiler's strategy as described above in response to $\Omega_i$. 
\end{itemize}
\begin{definition}
Let $\mathcal D \subseteq \mathcal C_N$ be such that $\mathbf G(\mathcal D)$ is isomorphic to $H$. Since $H$ is a triangulation, $\mathcal D$ has exactly 3 outer circles. The rest are inner circles. For each $\omega \in \mathcal C_N$, say that $\omega$ is \textit{relevant} if $\omega \in \mathcal D$; otherwise, say that $\omega$ is \textit{irrelevant}. 
\end{definition}

The proof of correctness proceeds with a series of claims. 
We begin by proving a lemma about planar graph drawings, from which the first claim will immediately follow. 
\begin{lemma}\label{lem:planarcycle}
Let $G$ be a plane graph drawing. Let $w \in G$ and $U = u_0, \ldots, u_{n-1}$ be an induced cycle with each $u_i$ adjacent to $w$. Suppose $w$ lies in the bounded region of $\mathbb R^2 \setminus U$. Then for any cycle $V = v_0, \ldots, v_{m-1}$ with each $v_i$ adjacent to $w$ and for which $w$ lies in the face bounded by $V$, we have $U \subseteq V$. 
\end{lemma}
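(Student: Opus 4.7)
The plan is to argue by contradiction: suppose some $u_i \notin V$, and derive an impossibility using planarity together with the induced-ness of $U$. The first step is to show $u_i \in D_V$. Indeed, the spoke $u_iw$ is an edge of $G$ with $w \in D_V$ by hypothesis; if $u_i$ lay outside $\overline{D_V}$, the edge $u_iw$ would have to cross $V$, which is impossible in a plane drawing since no endpoint of $u_iw$ lies on $V$ (as $u_i \notin V$ and $w \notin V$).

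Next, consider the wheel subgraph $V \cup \{w\} \cup \{v_jw\}_j$. Its embedding inherited from $G$ partitions $\overline{D_V}$ into $m$ closed triangular wedges $\overline{T_j^V}$ with vertex sets $\{w, v_j, v_{j+1}\}$, and $u_i$ lies in the interior of a unique wedge $T_{j_0}^V$. I now examine the maximal cyclic arc $u_p, u_{p+1}, \ldots, u_q$ in $U$'s cyclic order (indices mod $n$) whose vertices all lie in $\text{int}(T_{j_0}^V)$; this arc contains $u_i$. For the $U$-edge $u_{p-1}u_p$, maximality gives $u_{p-1} \notin \text{int}(T_{j_0}^V)$, and the standard plane-drawing argument shows that this edge cannot cross $\partial T_{j_0}^V$ except at a shared vertex. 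Since the only vertices on $\partial T_{j_0}^V$ are $w, v_{j_0}, v_{j_0+1}$ and neither endpoint of $u_{p-1}u_p$ can be $w$, this forces $u_{p-1} \in \{v_{j_0}, v_{j_0+1}\}$. Symmetrically, $u_{q+1} \in \{v_{j_0}, v_{j_0+1}\}$.

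The argument then splits into cases. If $u_{p-1} \neq u_{q+1}$, then $\{u_{p-1}, u_{q+1}\} = \{v_{j_0}, v_{j_0+1}\}$, and the $V$-edge $v_{j_0}v_{j_0+1}$ is also the edge $u_{p-1}u_{q+1}$ of $G$. If $u_{p-1}$ and $u_{q+1}$ are non-adjacent in $U$ (arc length strictly less than $n-2$), this edge is a chord of the induced cycle $U$—contradiction. Otherwise (arc length exactly $n-2$, or the case $u_{p-1} = u_{q+1}$ with arc length $n-1$, or the degenerate case where all of $U$ lies in $\text{int}(T_{j_0}^V)$), every $U$-edge can be checked to lie in $\overline{T_{j_0}^V}$: the edges strictly inside the arc are in $\text{int}(T_{j_0}^V)$, the transitional edges $u_{p-1}u_p$ and $u_qu_{q+1}$ go from a corner of the wedge into the interior, and the remaining edge (if any) coincides with the boundary edge $v_{j_0}v_{j_0+1}$. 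Hence $U$ is a Jordan curve contained in the closed topological disk $\overline{T_{j_0}^V}$.

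Since $\mathbb{R}^2 \setminus \overline{T_{j_0}^V}$ is open, connected, and unbounded, and disjoint from $U$, it must sit inside the unbounded component of $\mathbb{R}^2 \setminus U$. Hence $\overline{D_U} \subseteq \overline{T_{j_0}^V}$, and since $D_U$ is open, $D_U \subseteq \text{int}(T_{j_0}^V)$. But $w \in D_U$ while $w \in \partial T_{j_0}^V$ as a vertex of the wedge—a contradiction, completing the proof. The main obstacle I anticipate is the careful bookkeeping of the maximal-arc sub-cases and ensuring that the induced-cycle hypothesis is invoked at precisely the right place (to kill the chord in the non-adjacent case); the concluding Jordan-curve step is routine once one observes that $\overline{T_{j_0}^V}$ is a closed topological disk.
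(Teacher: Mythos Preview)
Your argument is correct, and it takes a genuinely different route from the paper's. The paper works on the other side of the duality: it cuts the bounded region of $U$ into the $n$ wedges $F_i$ of the $U$-wheel, defines a map $f:\mathbb Z/m\mathbb Z\to\mathbb Z/n\mathbb Z$ by sending $v_j$ to the index of the wedge containing it, and observes that $f$ is surjective (from $w$ lying inside $V$) and satisfies $f(j+1)\in\{f(j)-1,f(j),f(j)+1\}$. These two facts force every adjacent pair of wedge indices $\{i,i+1\}$ to be hit by some consecutive pair $\{f(j),f(j+1)\}$, and then the spoke $u_{i+1}w$ separating $F_i$ from $F_{i+1}$ forces one of $v_j,v_{j+1}$ to equal $u_{i+1}$. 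So the paper's proof is essentially case-free and has a winding-number flavor.

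Your approach instead wedges up the $V$-side, localizes a hypothetical $u_i\notin V$ inside one $V$-wedge, and pushes the maximal $U$-arc in that wedge until its boundary vertices are forced to be the wedge corners $v_{j_0},v_{j_0+1}$. The dichotomy you then exploit---either the $V$-edge $v_{j_0}v_{j_0+1}$ is a chord of $U$ (contradicting induced-ness), or $U$ fits inside one closed wedge (so its bounded region misses $w$)---is a nice, self-contained contradiction and makes the role of the induced-cycle hypothesis completely explicit, whereas in the paper's proof it is used only tacitly. The price you pay is the four-way case split on arc length, but each case is short and your bookkeeping is sound; in particular the concluding Jordan-curve step (that $D_U\subseteq\operatorname{int}(T_{j_0}^V)$ since the complement of the closed wedge is connected and unbounded) is exactly right.
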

\begin{proof}
Fix such a cycle $V = v_0, \ldots, v_{m-1}$. For $0 \leq i \leq n-1$, denote by $F_i$ be the union of the singleton $\{u_i\}$ and the interior of the face bounded by $w$, $u_i$, and $u_{i+1}$, indices taken modulo $n$. From the condition that $w$ lies in the face bounded by $U$, it follows that for each $0 \leq j \leq m-1$, there is some $0 \leq i \leq n-1$ such that $v_j$ lies in $F_i$. Thus we have a function $f : \mathbb Z/m\mathbb Z \to \mathbb Z/n\mathbb Z$ defined by the property that $v_j$ lies in $F_{f(j)}$. From the condition that $w$ lies in the face bounded by $V$, it follows that $f$ is surjective. Moreover, 
\[
f(j + 1) \in \{f(j) - 1, f(j), f(j) + 1\}
\]
for each $j$. Together with surjectivity, this implies that for every $i$, there is some $j$ such that $\{f(j), f(j + 1)\} = \{i, i + 1\}$. But this can only happen if one of $v_i$ and $v_{i+1}$ coincides with $u_{i+1}$. 
\end{proof}

\begin{claim}\label{claim:layers}
Fix $1 \leq i \leq N$. Suppose $\Omega_i$ is an inner circle in $\mathcal D$. Suppose $\{\omega_j\}_{j=0}^{n-1} \subseteq \mathcal C_i$ surrounds $\Omega_i$. Then each $\omega_j$ is relevant. 
\end{claim}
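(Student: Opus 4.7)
The plan is to invoke \Cref{lem:planarcycle} with the ambient plane graph drawing taken to be $\mathbf G(\mathcal C_N)$, the vertex $w$ taken to be $\Omega_i$, the induced cycle $U$ taken to be $\{\omega_j\}_{j=0}^{n-1}$, and a second cycle $V$ taken to be the set of neighbors of $\Omega_i$ inside $\mathbf G(\mathcal D)$. Its conclusion $U \subseteq V$ is exactly the desired statement that every $\omega_j$ lies in $\mathcal D$, i.e., is relevant. Nothing about the $\Gamma$-strategy from \Cref{lem:bigcircle} is needed for this particular claim; it is purely a topological fact about the induced drawing.

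The first step is to verify that $U$ remains an induced cycle surrounding $\Omega_i$ in $\mathbf G(\mathcal C_N)$, not merely in $\mathbf G(\mathcal C_i)$. This is immediate from the observation that later moves only add new circles; they neither reposition the existing ones nor alter their pairwise tangencies. Hence the tangency structure of $\{\omega_j\}_{j=0}^{n-1} \cup \{\Omega_i\}$ is preserved from step $i$ onward, and $\Omega_i$ continues to lie in the bounded component of $\mathbb R^2 \setminus \bigcup_j \omega_j$.

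The second step is to exhibit $V$ with the hypotheses required by \Cref{lem:planarcycle}. Because $\mathbf G(\mathcal D) \cong H$ is a planar triangulation and $\Omega_i$ is an inner circle of $\mathcal D$, the vertex corresponding to $\Omega_i$ is not on the outer face of $\mathbf G(\mathcal D)$; therefore its neighbors in $\mathcal D$ can be cyclically enumerated as $v_0, \ldots, v_{m-1}$ with consecutive pairs adjacent in $\mathbf G(\mathcal D) \subseteq \mathbf G(\mathcal C_N)$, giving the required cycle $V$. Each $v_k$ is tangent to $\Omega_i$ in $\mathcal C_N$, so $v_k$ is adjacent to $\Omega_i$ in $\mathbf G(\mathcal C_N)$. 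With all hypotheses verified, \Cref{lem:planarcycle} yields $U \subseteq V$, and since $V \subseteq \mathcal D$ by construction, each $\omega_j$ is relevant.

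The main subtlety I expect to treat carefully is checking that the plane embedding of $\mathbf G(\mathcal D)$ used here is the geometric one inherited from the circle packing $\mathcal C_N$, rather than some other abstract planar embedding of $H$, and in particular that $\Omega_i$ does lie in the face bounded by $V$ in this drawing. Once one observes that vertices of $\mathbf G(\mathcal D)$ are placed at the centers of the corresponding circles and edges are the straight-line segments between tangent pairs, the cyclic ordering of neighbors around the inner vertex $\Omega_i$ is forced by the plane geometry of the tangent circles surrounding it, and the triangulation property of $H$ forces these neighbors to form a cycle whose bounded region contains the center of $\Omega_i$. With this in hand the application of \Cref{lem:planarcycle} is a formality.
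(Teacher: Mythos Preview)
Your proof is correct and follows essentially the same approach as the paper's: both apply \Cref{lem:planarcycle} in $\mathbf G(\mathcal C_N)$ with $U = \{\omega_j\}$ and a cycle $V \subseteq \mathcal D$ of neighbors of $\Omega_i$, concluding $U \subseteq V \subseteq \mathcal D$. The only cosmetic difference is that the paper obtains $V$ directly from the definition of ``inner circle in $\mathcal D$'' (which hands you a surrounding set), whereas you pass through the triangulation property of $H$ to argue that the full neighbor set of $\Omega_i$ in $\mathcal D$ forms a cycle; since \Cref{lem:planarcycle} does not require $V$ to be induced, either choice works.
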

\begin{proof}
For $0 \leq j \leq n-1$, let $u_j$ denote the center of $\omega_j$ and $w$ the center of $\Omega_i$. In the planar graph drawing $\mathbf G(\mathcal C_N)$, the vertex $w$ is adjacent to every vertex in the induced cycle $U = u_0, \ldots, u_{n-1}$, and $w$ lies in the face bounded by $U$. Since $\Omega_i$ is an inner circle in $\mathcal D$, there is another set of circles $\{\gamma_j\}_{j=0}^{m-1} \subseteq \mathcal D \subseteq \mathcal C_N$ which surrounds $\Omega_i$. For each $0 \leq j \leq m-1$, let $v_j$ denote the center of $\gamma_j$. Then in $\mathbf G(\mathcal C_N)$, the vertex $w$ is adjacent to every vertex in the cycle $V = v_0, \ldots, v_{m-1}$, and $w$ lies in the face bounded by $V$. By \Cref{lem:planarcycle}, $U \subseteq V$, which implies that $\omega_j$ is relevant for all $0 \leq j \leq n-1$. 
\end{proof}

Our central claim, which is a corollary of \Cref{lem:bigcircle}, isolates the essential combinatorial properties of Spoiler's strategy. 
\begin{claim}\label{claim:main}
Fix $1 \leq i \leq N$. The following hold. 
\begin{enumerate}
    \item $\Omega_i$ is either a relevant inner circle in $\mathcal C_i$, an outer circle in $\mathcal D$, or irrelevant. 
    \item $\Gamma_i$ is either an outer circle in $\mathcal D$ or irrelevant. 
\end{enumerate}
\end{claim}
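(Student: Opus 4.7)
The plan is to argue both parts by contradiction, extracting a lower bound on the degree of the circle in question from \Cref{lem:bigcircle} and a matching upper bound from planar topology. For part 1, I would assume $\Omega_i$ is relevant, not an inner circle in $\mathcal{C}_i$, and---toward a contradiction---not an outer circle in $\mathcal{D}$; since $H$ is a triangulation, this forces $\Omega_i$ to be an inner circle in $\mathcal{D}$, and hence in $\mathcal{C}_N$ as well, because the cycle surrounding $\Omega_i$ in $\mathcal{D}$ still surrounds it in $\mathcal{C}_N$. Because Spoiler played $\Gamma_i = \Gamma(\mathcal{C}_{i-1} \cup \{\Omega_i\}, \Omega_i, \Delta + 1)$, \Cref{lem:bigcircle} then gives $\deg_{\mathcal{C}_N}(\Omega_i) \geq \Delta + 1$.

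The key step is to show $\deg_{\mathcal{C}_N}(\Omega_i) = \deg_{\mathcal{D}}(\Omega_i)$, which together with $\deg_{\mathcal{D}}(\Omega_i) \leq \Delta$ yields the desired contradiction. Letting $U \subseteq \mathcal{C}_N$ be the induced cycle of neighbors of $\Omega_i$ in $\mathbf{G}(\mathcal{C}_N)$ (induced because $\Omega_i$ is inner in $\mathcal{C}_N$) and $V \subseteq \mathcal{D}$ any cycle surrounding $\Omega_i$ in $\mathcal{D}$, I would apply \Cref{lem:planarcycle} to $\mathbf{G}(\mathcal{C}_N)$ with $w$ the center of $\Omega_i$ to conclude $U \subseteq V$. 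Since every vertex of $V$ is adjacent to $w$, it lies in $U$, so the reverse inclusion $V \subseteq U$ is automatic, giving $U = V$ and hence the equality of degrees.

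For part 2, the argument is symmetric whenever Spoiler played $\Gamma_i$ via \Cref{lem:bigcircle}: if $\Gamma_i$ were inner in $\mathcal{D}$, the lemma would force $\deg_{\mathcal{C}_N}(\Gamma_i) \geq \Delta + 1$, and the same degree identification produces the contradiction. The only additional case is when $\Omega_i$ was already inner in $\mathcal{C}_i$ and Spoiler's move was ``arbitrary''; I would refine this placement so that Spoiler instead plays $\Gamma_i = \Gamma(\mathcal{C}_i, \Omega^*, \Delta + 1)$ for some non-inner $\Omega^* \in \mathcal{C}_i$ (which always exists, since every finite packing has outer circles), thereby preserving the lemma's guarantee for $\Gamma_i$. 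The main obstacle I anticipate is the degree identification: \Cref{claim:layers} as stated is phrased for surrounding cycles in $\mathcal{C}_i$, but the contradiction requires the analogous statement in $\mathcal{C}_N$, which calls for invoking \Cref{lem:planarcycle} directly in the larger graph rather than appealing to \Cref{claim:layers} as a black box.
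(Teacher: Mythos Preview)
Your proposal is correct and follows the same contradiction scheme as the paper, but your degree-identification step is an unnecessary detour. The paper applies \Cref{lem:bigcircle} directly to a surrounding cycle $\{\omega_j\}_{j=0}^{n-1}$ chosen from $\mathcal D$ itself (viewed as sitting inside a packing containing $\mathcal C_i$), obtaining $n\ge \Delta+1$ for \emph{that} cycle; since each $\omega_j$ lies in $\mathcal D$ by construction, relevance is immediate and $\deg_{\mathcal D}(\Omega_i)\ge \Delta+1$ falls out without any appeal to \Cref{lem:planarcycle}. In particular, the paper's own citation of \Cref{claim:layers} here is already redundant for the same reason, so the obstacle you anticipate never actually arises. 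Your route through $\deg_{\mathcal C_N}$ followed by $U=V$ works fine, but it reproves what is automatic.

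Your handling of item (2) is sharper than the paper's. When $\Omega_i$ is already inner upon placement, the paper has Spoiler play ``an arbitrary circle not adjacent to anything else'' and then, in proving item (2), invokes \Cref{lem:bigcircle} for $\Gamma_i$ without comment---but the lemma's guarantee for $\Gamma_i$ only holds when $\Gamma_i$ was produced by the lemma. Your refinement, replacing the arbitrary move by $\Gamma(\mathcal C_{i-1}\cup\{\Omega_i\},\Omega^*,\Delta+1)$ for some non-inner $\Omega^*$, is exactly the patch needed and costs nothing.
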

\begin{proof}
We begin by proving item (1). Suppose $\Omega_i$ is relevant, not an inner circle in $\mathcal C_i$, and not an outer circle in $\mathcal D$. Then $\Omega_i$ is an inner circle in $\mathcal D \supseteq \mathcal C_i \cup \{\Gamma_i\}$, meaning that there exists some $\{\omega_j\}_{j=0}^{n-1} \subseteq \mathcal D$ which surrounds $\Omega_i$. By \Cref{lem:bigcircle}, $n \geq \Delta + 1$. However, by \Cref{claim:layers}, each $\omega_j$ is relevant, so the maximum degree of $\mathbf G(\mathcal D)$ exceeds that of $H$, contradiction. This proves item (1). 

The proof of item (2) proceeds along similar lines. Suppose $\Gamma_i$ is an inner circle in $\mathcal D$, meaning that there exists some $\{\omega_j\}_{j=0}^{n-1} \subseteq \mathcal D$ which surrounds $\Gamma_i$. By \Cref{lem:bigcircle}, $n \geq \Delta + 1$, and finish as before. 
\end{proof}

Next we show that, over the course of the game, the 3 outer circles in $\mathcal D$ must be placed before any inner circles of $\mathcal D$. 
\begin{claim}\label{claim:outerfirst}
Let $\Phi$ be the first circle in the sequence $\Omega_1, \Gamma_1, \ldots, \Omega_N, \Gamma_N$ which is an inner circle in $\mathcal D$. The following hold. 
\begin{enumerate}
    \item $\Phi = \Omega_i$ for some $i$. 
    \item Any circle which occurs earlier than $\Phi$ in the sequence is either one of the 3 outer circles of $\mathcal D$ or irrelevant. 
\end{enumerate}
\end{claim}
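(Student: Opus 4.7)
The plan is to deduce both items essentially as immediate corollaries of \Cref{claim:main}, together with the elementary structural fact that, since $\mathbf{G}(\mathcal{D}) \cong H$ is a planar triangulation on at least three vertices, every circle of $\mathcal{D}$ is either one of the three outer circles of $\mathcal{D}$ or an inner circle in $\mathcal{D}$---there is no third possibility.

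For item (1), I would invoke \Cref{claim:main}(2), which asserts that every Spoiler circle $\Gamma_j$ is either an outer circle in $\mathcal{D}$ or irrelevant; in particular, no $\Gamma_j$ is ever an inner circle in $\mathcal{D}$. Since $\Phi$ is by definition an inner circle in $\mathcal{D}$, $\Phi$ cannot coincide with any $\Gamma_j$, so $\Phi = \Omega_i$ for some $i$.

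For item (2), I would fix an arbitrary circle $\omega$ appearing strictly before $\Phi$ in the sequence. By the minimality built into the definition of $\Phi$, $\omega$ is not an inner circle in $\mathcal{D}$. If $\omega$ is irrelevant there is nothing further to prove, so suppose $\omega \in \mathcal{D}$; then the dichotomy above forces $\omega$ to be one of the three outer circles of $\mathcal{D}$. The one mild subtlety worth flagging is the third option permitted by \Cref{claim:main}(1) for a Builder circle, namely being ``a relevant inner circle in $\mathcal{C}_j$''---but this is a statement about the partial packing $\mathcal{C}_j$, not about $\mathcal{D}$, and it still entails $\Omega_j \in \mathcal{D}$, so the same dichotomy applies and $\Omega_j$ must be outer in $\mathcal{D}$.

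Given that \Cref{claim:main} and \Cref{claim:layers} have already absorbed all of the geometric content, there is no real obstacle here; the claim is best viewed as a bookkeeping corollary of \Cref{claim:main} that pins down the temporal order in which the outer and inner circles of the target subpacking $\mathcal{D}$ can first appear in the sequence of moves, setting up the remainder of the argument.
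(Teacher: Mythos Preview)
Your proof is correct for the claim as literally stated. Item (1) matches the paper exactly. For item (2) you take a more direct route than the paper: you invoke only the minimality of $\Phi$ together with the structural dichotomy that every circle of the triangulation-packing $\mathcal D$ is either one of the three outer circles or an inner circle. This is valid and is in fact the cleaner argument for item (2) as written.

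The paper, by contrast, argues via \Cref{claim:main}(1) and \Cref{claim:layers}: it shows that $\Phi = \Omega_i$ must already be an inner circle in $\mathcal C_i$, that the circles surrounding it there are relevant, hence (by minimality of $\Phi$ and the same dichotomy) outer, hence are \emph{exactly} the three outer circles of $\mathcal D$. This establishes a strictly stronger conclusion than item (2)---namely, that all three outer circles of $\mathcal D$ have already been placed, and are pairwise tangent, by the time $\Phi$ appears. That extra information is what the paper actually invokes in the proof of \Cref{claim:apollonian} when it asserts $\mathbf G(\mathcal D_{i_0-1}) \cong K_3$ (and, downstream, that every later $\Gamma_j$ is irrelevant because all outer circles are already present). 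Your argument only yields $\mathbf G(\mathcal D_{i_0-1}) \subseteq K_3$. So while your proof settles the claim as stated, be aware that the paper is really packaging a slightly stronger fact under the same label, and its subsequent argument leans on that stronger fact.
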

\begin{proof}
By item (2) of \Cref{claim:main}, $\Phi$ must be $\Omega_i$ for some $i$. Since $\Omega_i$ is an inner circle in $\mathcal D$, by item (1) of \Cref{claim:main}, it must be an inner circle in $\mathcal C_i$, so there exists some $\{\omega_j\}_{j=0}^{n-1} \subseteq \mathcal C_i$ which surrounds $\Omega_i$. By \Cref{claim:layers}, each $\omega_j$ is relevant. But $\Omega_i$ is the \textit{first} relevant inner circle in the sequence, so each $\omega_j$ must be outer circles in $\mathcal D$. Since the induced cycle $\{\omega_j\}_{j=0}^{n-1}$ consists of at least 3 circles, it in fact must consist of exactly 3 circles, these being the outer circles of $\mathcal D$. 
\end{proof}
For each $1 \leq i \leq N$, define $\mathcal D_i = \mathcal D \cap \mathcal C_i$. In particular, $\mathcal D_N = \mathcal D$. 
\begin{claim}\label{claim:apollonian}
For all $i$, $\mathbf G(\mathcal D_i)$ is either a subgraph of $K_3$ or an Apollonian network. 
\end{claim}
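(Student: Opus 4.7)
The plan is to induct on $i$, strengthening the statement so that the invariant holds after \emph{each single move} rather than each pair of moves. Writing the game's move sequence as $\sigma_1, \sigma_2, \ldots$ with $\sigma_{2k-1} = \Omega_k$ and $\sigma_{2k} = \Gamma_k$, and letting $\mathcal D_{(j)} = \mathcal D \cap \{\sigma_1, \ldots, \sigma_j\}$, it suffices to show that $\mathbf G(\mathcal D_{(j)})$ is a subgraph of $K_3$ or an Apollonian network for all $j$, since $\mathcal D_i = \mathcal D_{(2i)}$. The easy cases, dispatched quickly, are when $\sigma_j$ is irrelevant and when $\sigma_j$ is one of the three outer circles of $\mathcal D$: in the latter case Claim \ref{claim:outerfirst} forces $\sigma_j$ to be placed before $\Phi$, so $\mathcal D_{(j)}$ remains a subset of the pairwise-tangent triple of outer circles of $\mathcal D$.

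The substantive case is when $\sigma_j$ is a relevant inner circle of $\mathcal D$. Claim \ref{claim:main} then gives $\sigma_j = \Omega_i$ for some $i$ with $\Omega_i$ an inner circle in $\mathcal C_i$; fix a surrounding cycle $U \subseteq \mathcal C_i$, and use Claim \ref{claim:layers} to see every element of $U$ is relevant. A short sub-argument shows $U \subseteq \mathcal C_{i-1}$: either Spoiler's $\Gamma_i$ was chosen via Lemma \ref{lem:bigcircle} and is not tangent to $\Omega_i$, or $\Omega_i$ was already surrounded before Spoiler moved and $U$ may be chosen to avoid $\Gamma_i$. Either way $U \subseteq \mathcal D_{(j-1)}$. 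The case $\sigma_j = \Phi$ is now immediate: minimality of $\Phi$ forces each circle of $U$ to be an outer circle of $\mathcal D$, so $|U| = 3$ and $\mathcal D_{(j-1)}$ equals the triple of outer circles; thus $\mathbf G(\mathcal D_{(j-1)}) = K_3$ and $\mathbf G(\mathcal D_{(j)}) = K_4$.

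When $\sigma_j$ is a relevant inner circle placed strictly after $\Phi$, the graph $\mathbf G(\mathcal D_{(j-1)})$ is an Apollonian network by induction. Consider its natural planar drawing coming from the circle packing $\mathcal D_{(j-1)}$. A brief geometric observation places $\Omega_i$'s center in the interior of some face $F$ of this drawing: an edge is the segment between the centers of two externally tangent circles, and $\Omega_i$'s center lying on such a segment would force $\Omega_i$ to overlap one of the two circles. Every face of an Apollonian network (including the outer face) is a triangle, so $|\partial F| = 3$. Planarity of the enlarged circle packing drawing of $\mathbf G(\mathcal D_{(j)})$ then forces every neighbor of $\Omega_i$ to lie on $\partial F$, bounding $\Omega_i$'s degree by $3$; but $\Omega_i$ is tangent to every circle of $U$, giving at least $|U| \geq 3$ neighbors. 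Hence $\Omega_i$'s three neighbors are precisely $\partial F$, so $\mathbf G(\mathcal D_{(j)})$ arises from $\mathbf G(\mathcal D_{(j-1)})$ by one Apollonian construction step, and Lemma \ref{apollonian} completes the induction.

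I expect the main obstacle to be organizing the case split cleanly, and especially making the ``$U \subseteq \mathcal C_{i-1}$'' step airtight: Spoiler's strategy is essential here, since the circle from Lemma \ref{lem:bigcircle} is positioned at positive distance from $\Omega_i$ and hence not tangent to it, cutting off the possibility that $\Gamma_i$ slips into the surrounding cycle. After this, the combinatorics is routine: the planarity argument is short once one rules out $\Omega_i$'s center landing on an edge, and the Apollonian construction step is already packaged in Lemma \ref{apollonian}.
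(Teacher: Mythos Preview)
Your proof is correct and follows the same inductive scaffolding as the paper's, invoking Claims \ref{claim:main}, \ref{claim:layers}, and \ref{claim:outerfirst} at the same junctures. Two sub-arguments differ. First, to exclude $\Gamma_i$ from the surrounding cycle $U$, the paper simply notes that once past the first inner circle every Spoiler move is irrelevant (Claim \ref{claim:main}(2)), whereas you use the geometric fact that Spoiler's circle---whether chosen via Lemma \ref{lem:bigcircle} or as an isolated circle---is never tangent to $\Omega_i$; both work. Second, and more substantively, to deduce $|U|=3$ the paper observes that $U$ is an induced cycle in the Apollonian network $\mathbf G(\mathcal D_i)$ and appeals (implicitly) to chordality of $3$-trees, while you argue via planarity of the circle-packing drawing: the center of $\Omega_i$ lies in some triangular face $F$ of $\mathbf G(\mathcal D_{(j-1)})$, so all its neighbors lie on $\partial F$, forcing degree exactly $3$ and $U=\partial F$. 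Your route has the bonus of simultaneously verifying that $U$ bounds a \emph{face} and that $\Omega_i$ acquires no extra neighbors, a point the paper leaves tacit when it concludes ``therefore $\mathbf G(\mathcal D_{i+1})$ is an Apollonian network.'' The closing appeal to Lemma \ref{apollonian} is harmless but unnecessary: a single construction step is already covered by the recursive definition.
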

\begin{proof}
We induct on $i$. Let $i_0$ be the smallest index such that $\Omega_{i_0}$ is an inner circle in $\mathcal D$. By \Cref{claim:outerfirst}, $\mathbf G(\mathcal D_{i_0 - 1}) \cong K_3$. Now let $i \geq i_0 - 1$, and assume inductively that $\mathbf G(\mathcal D_i)$ is an Apollonian network. We aim to show that $\mathbf G(\mathcal D_{i+1})$ is an Apollonian network. Since $i + 1 > i_0$ so all 3 outer circles in $\mathcal D$ have already been placed at this point, by item (2) of \Cref{claim:main}, $\Gamma_{i+1}$ is irrelevant. By item (1) of \Cref{claim:main}, $\Omega_{i+1}$ is either a relevant inner circle in $\mathcal C_{i+1}$ or irrelevant. If $\Omega_{i+1}$ is irrelevant, then $\mathcal D_{i+1} = \mathcal D_i$, and we are done. 

Assume, then, that $\Omega_{i+1}$ is a relevant inner circle in $\mathcal C_{i+1}$, so that $\mathcal D_{i+1} = \mathcal D_i \cup \{\Omega_{i+1}\}$. Let $U = \{\omega_j\}_{j=0}^{n-1} \subseteq \mathcal C_{i+1}$ surround $\Omega_{i+1}$. By \Cref{claim:layers}, $\omega_j$ is relevant or all $j$, i.e. $U \subseteq \mathcal D_{i+1}$. In fact, $U \subseteq \mathcal D_i$ since $\mathcal D_{i+1} = \mathcal D_i \cup \{\Omega_{i+1}\}$. Since $\mathbf G(U) \subseteq \mathbf G(\mathcal D_i)$ is an induced cycle and Apollonian networks are triangulations, it follows that $\mathbf G(U) \cong K_3$. Therefore $\mathbf G(\mathcal D_{i+1})$ is an Apollonian network. 
\end{proof}

In particular, \Cref{claim:apollonian} implies that $\mathbf G(\mathcal D_N)$ is either a subgraph of $K_3$ or an Apollonian network, which proves \Cref{thm:circle}. 

\section{Proof of \Cref{thm:biasedcircle}}
We first state a classic result about the box game, introduced by Chvátal and Erdős \cite{CE78, HLV87, HKSS14}. Let $\mathcal D(m, n)$ denote the hypergraph with $m$ disjoint edges, each of size $n$. Let $\varepsilon \in (0, 1)$. The \textit{$(1+\varepsilon) : 1$ biased $(m, n)$ box game}, is defined as follows. Two players, Maker and Breaker, take turns occupying a vertex of $\mathcal D(m, n)$. Every $\floor{1/\varepsilon}$ moves, Maker is given one additional move. Maker's goal is to fully occupy an edge of $\mathcal D(m, n)$; Breaker's goal is to prevent this. 
\begin{lemma}\label{lem:boxgame}
Fix $n \geq 1$ and $\varepsilon \in (0, 1)$. If $m \geq \floor{1/\varepsilon + 1}^{n-1}$ then Maker wins the $(1+\varepsilon) : 1$ biased $(m, n)$ box game. 
\end{lemma}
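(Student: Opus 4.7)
The proof proceeds by induction on $n$. The base case $n = 1$ is immediate: on his first move, Maker plays the single vertex of any edge and wins.

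For the inductive step, let $q = \floor{1/\varepsilon}$, so that the hypothesized threshold reads $m \geq (q+1)^{n-1}$, and assume Maker wins the $(1+\varepsilon):1$ biased $(m', n-1)$ box game whenever $m' \geq (q+1)^{n-2}$. I will reorganize the game so that every round consists of Maker playing $q+1$ consecutive moves followed by Breaker playing $q$ consecutive moves; this reordering only helps Maker and matches the $(1+\varepsilon):1$ pacing after $\floor{1/\varepsilon}$ turns.

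Given $m \geq (q+1)^{n-1}$ boxes, Maker partitions them into $q+1$ groups $G_1, \ldots, G_{q+1}$, each of size $(q+1)^{n-2}$, and designates one ``marker vertex'' in every box. In the first round, Maker plays the marker vertex of one chosen box $B_i \in G_i$ for each $i$, using all $q+1$ of his moves. Breaker then plays $q$ moves in response; by the pigeonhole principle at least one group $G^*$ receives no Breaker moves, so $B^* \in G^*$ sits at level $1$ (with its marker vertex Maker-owned) while every other box in $G^*$ is untouched. Maker then restricts play to $G^*$ and invokes the inductive hypothesis on the sub-hypergraph whose edges are the $n-1$ non-marker vertices of each box in $G^*$; this sub-game has $(q+1)^{n-2}$ edges of size $n-1$, and by induction Maker fills all $n-1$ non-marker vertices of some box $B \in G^*$. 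If $B = B^*$, then combined with the marker vertex already Maker-owned, $B^*$ has all $n$ vertices Maker-owned and Maker wins the original game.

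The main obstacle is the case $B \neq B^*$, where Maker must also secure $B$'s marker vertex in order to convert his sub-game victory into a win in the original game. I resolve this by augmenting the sub-strategy with a \emph{piggyback rule}: the first time the sub-strategy plays a non-marker vertex in a fresh box $B' \in G^*$, Maker simultaneously claims $B'$'s marker vertex in the same round. Since Maker has $q+1$ moves per round against Breaker's $q$ and the sub-strategy can visit at most $(q+1)^{n-2}$ distinct boxes over the course of the sub-game, the cumulative surplus of one Maker move per round absorbs the piggyback plays with room to spare. A cleaner formalization is to strengthen the inductive hypothesis to include head-start configurations (a level-$1$ box at the outset alongside the required number of untouched boxes), folding the marker-vertex bookkeeping directly into the induction; verifying this carefully is the main technical step in the proof.
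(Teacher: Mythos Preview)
Your argument has a genuine gap that you yourself flag but do not close. After the first round you restrict to a group $G^*$ in which only the single box $B^*$ has its marker vertex claimed; when you then invoke the inductive hypothesis on the non-marker vertices of $G^*$, the box $B$ that Maker eventually fills need not be $B^*$, and during the sub-game Breaker is free to spend moves occupying marker vertices of boxes in $G^*$, in particular the marker of $B$. Your ``piggyback rule'' does not rescue this as written: every piggyback claim is a Maker move diverted away from the sub-strategy, so from the sub-game's point of view Maker is now playing with strictly less than the $(1+\varepsilon):1$ bias, and the inductive hypothesis no longer applies. There is no ``surplus of one move per round'' beyond what the sub-strategy already consumes. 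The alternative you float---strengthening the induction to allow head-start configurations---is neither formulated nor verified, and that verification is exactly the work the lemma requires.

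The paper sidesteps this obstacle with a simpler first phase: rather than marking one box per group, Maker marks one vertex in \emph{every} one of the $(q+1)^{n-1}$ boxes. This costs $(q+1)^{n-1}$ Maker moves, during which Breaker makes only $q(q+1)^{n-2}$ moves and hence touches at most that many boxes; so at least $(q+1)^{n-1} - q(q+1)^{n-2} = (q+1)^{n-2}$ boxes carry one Maker mark and no Breaker mark. Maker then plays the $((q+1)^{n-2}, n-1)$ sub-game on the remaining vertices of those clean boxes, and since every box in the sub-game already has its extra vertex claimed, the inductive win converts directly into a fully occupied box with no further bookkeeping.
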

\begin{proof}
Induct on $n$. If $n=1$, then Maker wins on his very first turn. Now suppose $n \geq 2$, and let $q = \floor{1/\varepsilon}$. In the first $q(q+1)^{n-2}$ turns, Maker puts one mark in each of the $(q+1)^{n-1}$ edges. No matter how Breaker responded to those moves, there must be at least $(q+1)^{n-1} - q(q+1)^{n-2} = (q+1)^{n-2}$ edges which have one vertex occupied by Maker and none occupied by Breaker. Maker now plays the $(1+\varepsilon):1$ biased $(\floor{1/\varepsilon + 1}^{n-2}, n-1)$ box game on those edges, and by induction, wins. 
\end{proof}
Now return to the theorem at hand. Fix a planar graph $H$ and $\varepsilon \in (0, 1)$. Let $n = |V(H)|$ and $m = \floor{1/\varepsilon + 1}^{n-1}$. Builder's strategy consists of two phases: making $m$ faces in the circle packing, then playing the box game on those $m$ faces. 

More precisely, in the first (at most) $m+2$ turns, Builder does the following: first place two circles tangent to each other, then for each of the remaining $m$ moves, choose some pair of tangent circles $\Omega_1$ and $\Omega_2$, and place a circle $\Omega$ tangent to both $\Omega_1$ and $\Omega_2$. Let $\mathcal C$ denote the circle packing at this point; evidently $\mathbf G(\mathcal C)$ has at least $m$ faces $F_1, \ldots, F_m$. 

By the Koebe-Andreev-Thurston circle packing theorem (see \cite{PA95} for a proof), there is a circle packing $\mathcal C_H$ such that $\mathbf G(\mathcal C_H) \cong H$. For each face $F_i$, let $\mathcal C_i = \{\Omega_{i,j}\}_{j=1}^n$ be a set of circles such that $\Omega_{i,j}$ lies inside $F_i$ for all $1 \leq j \leq n$, and $\mathbf G(\mathcal C_i) \cong H$. Such $\mathcal C_i$ exist by the circle packing theorem. Builder wins if he manages to place all $n$ of the circles in $\mathcal C_i$ for some $i$ before Spoiler has placed a single circle in the face $F_i$. But this is precisely the $(1+\varepsilon) : 1$ biased $(m, n)$-box game, and by \Cref{lem:boxgame}, Builder has a winning strategy. 

\section{Proof of \Cref{thm:diameter}}
\begin{figure}[t]
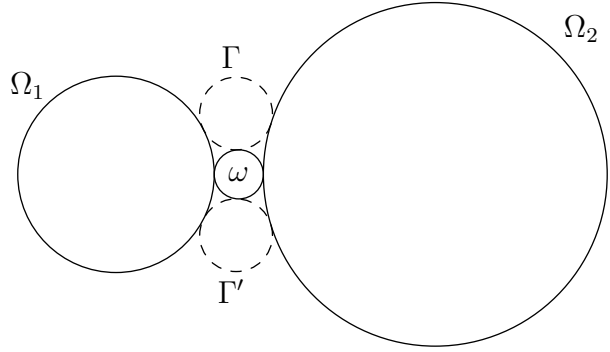

\begin{asy}
size(8cm);
pair A = (-5, 0), B = (8, 0), C = (0, 0), D = (-1/9, 8 * sqrt(70) / 27), E = conj(D);
real rA = 4, rB = 7, rC = 1, rD = length(D - A) - rA, rE = rD;
draw(circle(A, rA));
draw(circle(B, rB));
draw(circle(C, rC));
draw(circle(D, rD), dashed);
draw(circle(E, rE), dashed);
label("$\Omega_1$", A + 0.9 * (-rA, rA));
label("$\Omega_2$", B + 0.85 * (rB, rB));
label("$\omega$", C);
label("$\Gamma$", D + (-0.2, rD + 0.8));
label("$\Gamma'$", E + (-0.2, - rD - 0.8));
\end{asy}
\centering
\caption{Two threats, $\Gamma$ and $\Gamma'$, at once}
\label{fig:winning}
\end{figure}
\begin{definition}
Let $\mathcal C$ be a circle packing. We say that $\mathcal C$ is a \textit{winning position} (as shown in \Cref{fig:winning}) if there are circles $\Omega_1, \Omega_2, \omega \in \mathcal C$ such that:
\begin{itemize}
    \item $\omega$ is tangent to $\Omega_1$ and to $\Omega_2$;
    \item the two circles $\Gamma$ and $\Gamma'$ tangent to all three circles $\Omega_1$, $\Omega_2$, and $\omega$ are such that $\mathcal C \cup \{\Gamma, \Gamma'\}$ is a circle packing (recall that this means $\Gamma,\Gamma'$ do not intersect the interiors of any circles in $\mathcal C$);
    \item none of the new inner faces created by adding $\Gamma$ or $\Gamma'$ to $\mathcal C$ contain any circles in $\mathcal C$; and
    \item there is no circle $\gamma$ such that $\mathcal C\cup \{\gamma\}$ is a circle packing and $\gamma$ intersects the interiors of both $\Gamma$ and $\Gamma'$. 
\end{itemize}
For clarity, we may also say that $\mathcal C$ is a \textit{winning position with witness} $(\Omega_1, \Omega_2, \omega)$. 
\end{definition}

\begin{lemma}
If Builder can reach a winning position, then he can construct arbitrarily large Apollonian networks. 
\end{lemma}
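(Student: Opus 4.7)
The plan is to give Builder a strategy that, starting from the given winning position, iteratively reaches further winning positions and uses each one to force a single circle that extends a growing Apollonian subgraph of the contact graph.

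From the hypothesized winning position $\mathcal C$ with witness $(\Omega_1, \Omega_2, \omega)$ and threats $\Gamma, \Gamma'$, condition (4) in the definition ensures that Spoiler cannot simultaneously block both $\Gamma$ and $\Gamma'$ with a single move, so Builder can play the unblocked threat on his next turn---without loss of generality $\Gamma$. Since $\Gamma$ is tangent to each of $\Omega_1, \Omega_2, \omega$ and $\omega$ is tangent to $\Omega_1$, the triple $\{\Omega_1, \omega, \Gamma\}$ is mutually tangent, yielding a $K_3$ in the contact graph---an Apollonian network on three vertices.

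To continue, I would have Builder play a small auxiliary circle $\omega^*$ tangent to $\Omega_1$ and $\Gamma$ inside the newly-created curvilinear face bounded by $\Omega_1, \omega, \Gamma$, with the intent of making $(\Omega_1, \Gamma, \omega^*)$ the witness of a new winning position. The two Descartes circles $\Delta, \Delta'$ of this triple lie on opposite sides of $\omega^*$ within the face, and by \Cref{lem:descartes} their radii can be made arbitrarily small by shrinking $\omega^*$. For a sufficiently small choice, $\Delta$ and $\Delta'$ both fit in the packing, their bounded faces contain no other circles, and no single externally-tangent circle can intersect both their interiors---any such circle would have to cross $\omega^*$. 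Thus this is a valid winning position, and forcing one of its threats (say $\Delta$) places a circle tangent to three pairwise-tangent existing circles, i.e.\ an Apollonian extension. Iterating this construction extends the Apollonian subgraph by one vertex per round, producing Apollonian networks of arbitrary size.

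The main obstacle is verifying that a valid $\omega^*$ exists at every iteration, regardless of the circles Spoiler has placed. By \Cref{lem:descartes} one can quantify how small $\omega^*$ must be so that the new Descartes circles dodge interference from Spoiler's finitely many previously-placed circles; because Spoiler has only made finitely many moves, a continuum of suitable choices for $\omega^*$ remain available. A secondary care is ensuring the Apollonian subgraph grows coherently: each forced circle $\Delta$ is tangent to $\Omega_1$, $\omega^*$, and $\Gamma$, so the auxiliary $\omega^*$ must itself be included as an Apollonian vertex, and one must verify inductively that each $\omega^*$ is placed in a triangular face of the preceding Apollonian subgraph so that the recursive Apollonian structure is preserved.
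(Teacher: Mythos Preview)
Your approach has a genuine gap in the ``secondary care'' you flag but do not resolve: the auxiliary circle $\omega^*$ is, by your own description, tangent to only \emph{two} circles of the current Apollonian subgraph ($\Omega_1$ and $\Gamma$), not three. Inserting a vertex of degree~$2$ inside a triangular face does \emph{not} preserve the Apollonian structure---the face is no longer triangulated. Concretely, after your first round the circles $\{\Omega_1,\Gamma,\omega^*,\Delta\}$ do form a $K_4$, but once you iterate and place $\omega^*_2$ tangent to, say, $\Omega_1$ and $\Delta$ inside a face of that $K_4$, the contact graph on $\{\Omega_1,\Gamma,\omega^*,\Delta,\omega^*_2,\Delta_2\}$ has only $11$ edges, whereas an Apollonian network on $6$ vertices must have $12$. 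The face of the $K_4$ you worked in becomes a quadrilateral region, not two triangles. Dropping the $\omega^*$'s from the network does not help either, since then each $\Delta_k$ is tangent to only two network circles. So the subgraph you build is never a planar $3$-tree beyond the first $K_4$.

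The paper sidesteps this entirely by observing that you never need to manufacture a fresh winning position. Once Builder forces $\Gamma$, there are \emph{two} triangular faces $(\Omega_1,\omega,\Gamma)$ and $(\Omega_2,\omega,\Gamma)$, both empty; these are two simultaneous threats to inscribe a Descartes circle and form a $K_4$, and Spoiler can block only one. After the resulting $K_4$ there are three empty triangular faces, and from then on each Builder move fills one empty face (a genuine Apollonian extension, tangent to all three sides) and creates three new empty faces, while Spoiler spoils at most one---so the supply of empty triangular faces strictly increases. The key point you are missing is that every Builder move after the first can itself be the inscribed Descartes circle of an empty triangular face, so no degree-$2$ auxiliary circles are ever needed.
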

\begin{proof}
Suppose that immediately after his $i$th turn, Builder achieves a winning position $\mathcal C_i$ with witness $(\Omega_1, \Omega_2, \omega)$. Let $\Gamma, \Gamma'$ be the two circles tangent to all three of $\Omega_1$, $\Omega_1$, and $\omega$. On his $(i+1)$th turn, Builder is threatening to play either $\Gamma$ or $\Gamma'$. At the current turn, these are legal moves since, by definition of a winning position, $\mathcal C_i \cup \{\Gamma, \Gamma'\}$ is a circle packing. Moreover, Spoiler can block at most one of these threats since, again by assumption, Spoiler cannot legally place a circle $\gamma$ which intersects the interior of both $\Gamma$ and $\Gamma'$. 

It follows that, once Builder plays his $(i+1)$th move, the new circle packing $\mathcal C_{i+1}$ contains two distinct $K_3$'s. By definition, neither of the two faces bounded by these $K_3$'s contain any circles in $\mathcal C_{i+1}$. Therefore, both $K_3$'s threaten to become $K_4$'s on Builder's next turn, and Spoiler cannot stop them both since these threats lie in different faces. 

Thus, after Builder's $(i+2)$th move, the circle packing $\mathcal C_{i+2}$ now contains a $K_4$. Moreover, observe that no three of the inner faces of this $K_4$ contain any circles in $\mathcal C_{i+2}$. It follows that, for all of Builder's future moves, Builder can play a circle in an unoccupied face of the current Apollonian network, thereby repeatedly increasing its size. 
\end{proof}

It remains to show that Builder can reach a winning position. Let Spoiler move first (the argument is unaffected if instead Builder moves first), and without loss of generality suppose Spoiler plays a unit circle $\Omega_1$, centered at $(0, -1)$ and passing through $(0,0)$. The idea is that Builder will place a circle very close to $\Omega_1$ so that it will impossible for Spoiler to prevent Builder from achieving a winning position on the very next turn. 

The implementation details of this argument follow the flavor of \Cref{lem:bigcircle}. Fix $\delta > 0$ such that the parabola $x \mapsto -\delta x^2$ lies below the upper half of $\Omega_1$ over some neighborhood $(-a, a)$ of 0. We now fix some constants which will be used for quantitative estimates later on. Pick some $0 < x_0 < a$ and $\varepsilon > 0$ such that
\[
14\delta x_0 < \frac12 \quad \text{and} \quad \frac{7\varepsilon}{x_0} < \frac12.
\]
Now, let $\Omega_2$ be the circle centered at $(0, 1+\varepsilon)$ with radius 1, and note that the parabola $x \mapsto \varepsilon + \delta x^2$ lies above the lower half of $\Omega_2$ over the interval $(-a, a)$. Then:
\begin{center}
    Builder plays the circle $\Omega_2$. 
\end{center}
We claim that Builder will achieve a winning position on the very next turn. Let $\ell$ and $\ell'$ denote the vertical lines $x = -x_0$ and $x = x_0$ respectively.

Let $\omega_0$ be tangent to $\ell, \Omega_1, \Omega_2$; let $\omega_1$ be tangent to $\omega_1, \Omega_1, \Omega_2$; and let $\omega_2$ be tangent to $\omega_1, \Omega_1, \Omega_2$ in the manner shown in \Cref{fig:gap}. For $i=0,1,2$, let $\omega_i'$ be the reflection of $\omega_i$ in the $y$-axis. 

\begin{figure}
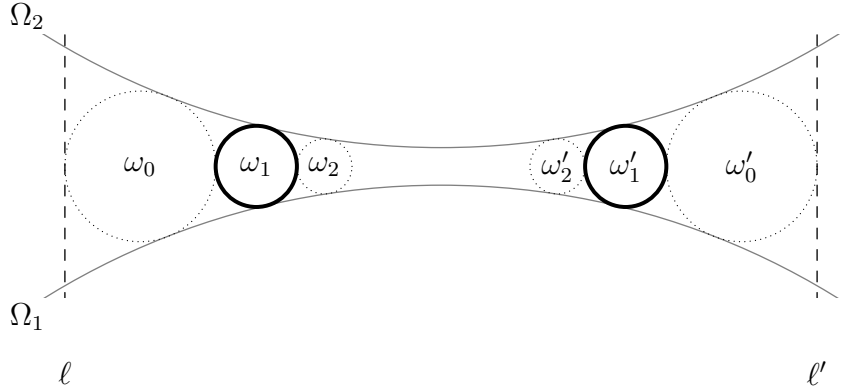

\begin{asy}
size(12cm);
real eps = 0.05, bX = 0.6, bY = 0.2, a = 0.5;
path border = (bX, bY)--(bX, -bY+eps)--(-bX, -bY+eps)--(-bX, bY)--cycle;
pair A = (0, 1 + eps);
draw(circle(A, 1)^^circle((0, -1), 1), gray);
draw((a, -bY-2*eps)--(a, bY)^^(-a, -bY-2*eps)--(-a, bY), dashed);

pair O0 = (-0.39979, eps/2);
real r0 = length(A - O0) - 1;
pair O1 = (-0.24558, eps/2);
real r1 = length(A - O1) - 1;
pair O2 = (-0.15493, eps/2);
real r2 = length(O1 - O2) - r1;
draw(circle(O0, r0), dotted);
draw(circle(O1, r1), linewidth(1.5));
draw(circle(O2, r2), dotted);
draw(circle(-conj(O0), r0), dotted);
draw(circle(-conj(O1), r1), linewidth(1.5));
draw(circle(-conj(O2), r2), dotted);
label("$\omega_1$", O1);
label("$\omega_1'$", -conj(O1));
label("$\omega_2$", O2);
label("$\omega_2'$", -conj(O2));
label("$\omega_0$", O0);
label("$\omega_0'$", -conj(O0));

clip(currentpicture, border);
label("$\Omega_1$", (-a - eps, -bY+eps/2));
label("$\Omega_2$", (-a - eps, bY+eps/2));
label("$\ell$", (-a, -bY-eps));
label("$\ell'$", (a, -bY-eps));
\end{asy}
\centering
\caption{Forcing a winning position}
\label{fig:gap}
\end{figure}

The proof of correctness now lies in the following claim. 

\begin{claim}\label{claim:gap}
There is no circle $\gamma$ satisfying:
\begin{itemize}
    \item $\gamma$ intersects the interior of both $\omega_i$ and $\omega_j'$ for some $i,j$; and
    \item $\{\Omega_1, \Omega_2, \gamma\}$ is a circle packing. 
\end{itemize}
\end{claim}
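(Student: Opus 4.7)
My plan is to suppose for contradiction that such a circle $\gamma$, with center $(c,d)$ and radius $R$, exists, and derive a contradiction from the narrow geometry of the gap between $\Omega_1$ and $\Omega_2$. Reflecting across the $y$-axis (which swaps $\omega_k$ and $\omega_k'$) and across the line $y = \varepsilon/2$, I may assume $c \geq 0$ and $d \geq \varepsilon/2$. Since $\gamma$ overlaps $\omega_i$ to the left of the $y$-axis and $\omega_j'$ to its right, $\gamma$'s disk must cross $x = 0$, forcing $c < R$.

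The key observation is that the vertical chord of $\gamma$ at $x = 0$, namely the interval $[d - \sqrt{R^2 - c^2},\, d + \sqrt{R^2 - c^2}]$, must avoid the interiors of $\Omega_1$ and $\Omega_2$ on the $y$-axis, so it lies in exactly one of the three components of the complement: $(-\infty, -2]$, $[0, \varepsilon]$, or $[2 + \varepsilon, \infty)$. The first is excluded by $d \geq \varepsilon/2$. For the third, a convexity argument works: $\gamma$ contains points $P_i$ and $P_j$ in the interiors of $\omega_i$ and $\omega_j'$ respectively, both within $\max(r_i, r_j)$ of the line $y = \varepsilon/2$, so the segment $\overline{P_i P_j} \subseteq \gamma$ meets $x = 0$ at a point whose $y$-coordinate is near $\varepsilon/2$, and cannot lie in $[2 + \varepsilon, \infty)$. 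Therefore the chord lies in $[0, \varepsilon]$, giving $2\sqrt{R^2 - c^2} \leq \varepsilon$ and thus $R \leq \sqrt{c^2 + \varepsilon^2/4} \leq c + \varepsilon/2$. Combined with the overlap constraint $(c + a_i)^2 + (d - \varepsilon/2)^2 < (R + r_i)^2$, which implies $c + a_i < R + r_i$, this forces $a_i - r_i < \varepsilon/2$.

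The main obstacle is then the quantitative verification that $a_i - r_i > \varepsilon/2$ for every $i \in \{0, 1, 2\}$. Since each $\omega_k$ is tangent to both $\Omega_1$ and $\Omega_2$ its center lies on the line $y = \varepsilon/2$, and the consecutive tangencies in the chain give the recursion $a_{k+1} + r_{k+1} = a_k - r_k$ with the initial condition $a_0 + r_0 = x_0$ (from $\omega_0$ being tangent to $\ell$). Telescoping yields $a_i - r_i = x_0 - 2 \sum_{k=0}^i r_k$, so the minimum over $i \in \{0,1,2\}$ is $a_2 - r_2$. The same parabolic containment argument used in the proof of \Cref{lem:width} bounds each radius by $r_k \leq \varepsilon/2 + \delta a_k^2 \leq \varepsilon/2 + \delta x_0^2$, so $a_2 - r_2 \geq x_0 - 3\varepsilon - 6\delta x_0^2 = x_0(1 - 6\delta x_0) - 3\varepsilon$. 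The hypotheses $14\delta x_0 < 1/2$ and $7\varepsilon/x_0 < 1/2$ are precisely calibrated so that this exceeds $\varepsilon/2$, closing the contradiction.
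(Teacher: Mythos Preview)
Your proof is correct, and it takes a different route from the paper's.

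The paper's argument is a horizontal sum--of--diameters count: it asserts that $\gamma$ must lie in the bounded face cut out by $\Omega_1,\Omega_2,\ell,\ell'$, so that its radius (like that of each $\omega_k$) is at most the vertical gap height $\varepsilon + 2\delta x_0^2$; then the seven circles $\omega_0,\omega_1,\omega_2,\gamma,\omega_2',\omega_1',\omega_0'$ form a chain whose $x$--projections cover $[-x_0,x_0]$, forcing $14(\varepsilon+2\delta x_0^2)\geq 2x_0$, contrary to the chosen constants. Your argument instead looks at the single vertical section $x=0$: the chord of $\gamma$ there is trapped in $[0,\varepsilon]$, which gives $R-c\le\varepsilon/2$, and the overlap with $\omega_i$ then yields $a_i-r_i<R-c\le\varepsilon/2$, contradicted by the explicit telescoping bound $a_2-r_2 = x_0-2\sum_{k\le 2}r_k>\varepsilon/2$.

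What each approach buys: the paper's version is shorter once you grant that $\gamma$ sits inside the bounded face (so its diameter is at most the gap height), but that containment is exactly the step the paper does not justify --- a priori $\gamma$ could have large radius with its center far outside the strip. Your chord argument sidesteps this entirely: you never need a global bound on $R$, only on $R-c$, and that follows cleanly from the packing condition on the $y$--axis together with your convexity observation ruling out the outer components. One organizational remark: you invoke ``$\omega_i$ is strictly to the left of the $y$--axis'' (i.e.\ $a_i>r_i$) before proving $a_i-r_i>\varepsilon/2$; since that inequality depends only on the $\omega_k$'s and not on $\gamma$, it would be cleaner to establish it first.
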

\begin{proof}
Suppose there were such a $\gamma$. Then $\gamma$ must lie in the face bounded by $\Omega_1, \Omega_2, \ell, \ell'$. Note that the radius of each of the circles $\{\omega_k, \omega_k'\}_{k=0}^2$ and $\gamma$ is at most the vertical distance between $\Omega_1$ and $\Omega_2$ at $x=x_0$, which in turn is at most the vertical distance between the parabolas $x \mapsto -\delta x^2$ and $x \mapsto \varepsilon + \delta x^2$ at $x=x_0$, namely $\varepsilon + 2\delta x_0^2$. Then the sum of the diameters of all 7 circles is at most $14(\varepsilon + 2\delta x_0^2)$. However, the distance between $\ell$ and $\ell'$ is $2x_0$, and by our choice of $x_0$ and $\varepsilon$, we have
\[
\frac{14(\varepsilon + 2\delta x_0^2)}{2x_0} = \frac{7\varepsilon}{x_0} + 14\delta x_0 < \frac12 + \frac12 = 1,
\]
impossible. 
\end{proof}
The upshot of \Cref{claim:gap} is that no matter how Spoiler moves, Builder will achieve a winning position on his next move. Suppose Spoiler plays a circle $\gamma$. Then by \Cref{claim:gap}, $\gamma$ is either internally disjoint with each of $\{\omega_0, \omega_1, \omega_2\}$ or internally disjoint with each of $\{\omega_0', \omega_1', \omega_2'\}$. Without loss of generality, assume the former. Then Builder plays $\omega_1$, and clearly $\{\Omega_1, \Omega_2, \omega_1, \gamma\}$ is a winning position with witness $(\Omega_1, \Omega_2, \omega_1)$.

\section{Further Questions}
We conclude by asking the following unresolved questions:
\begin{enumerate}
\item Can we close the gap between \Cref{thm:ham21} and \Cref{thm:ham13} and determine the threshold for the Hamiltonian cycle game?
\item \Cref{thm:diameter} exhibits graphs which are constructible in the circle packing game but not the edge drawing game. Are there graphs constructible in the edge drawing game but not the circle packing game?
\item It is easy to see that Builder can construct balanced binary trees of arbitrary depth in the circle packing game. Is this possible in the edge drawing game? Similarly, \Cref{thm:apollonian} shows that Builder cannot construct the pentagonal prism in the edge drawing game. Can Builder do so in the circle packing game? 
\end{enumerate}

\printbibliography

\end{document}